\theoremstyle{plain}
\newtheorem{theorem}{Theorem}[section]
\newtheorem{lemma}[theorem]{Lemma}
\newtheorem{proposition}[theorem]{Proposition}
\newtheorem{corollary}[theorem]{Corollary}
\newtheorem{assumption}[theorem]{Assumption}
\theoremstyle{definition}
\newtheorem{definition}[theorem]{Definition}
\newtheorem{remark}[theorem]{Remark}
\newtheorem{example}[theorem]{Example}
\newcommand{\eqlaw}{\stackrel{\mbox{\tiny law}}{=}}
\newcommand{\T}{\mathbb{T}}
\def\R{\mathbb{R}}
\def\E{\mathbb{E}}
\def\P{\mathbb{P}}
\def\Z{\mathbb{Z}}
\def\C{\mathbb{C}}
\newcommand{\mfK}{\mathfrak{K}}
\newcommand{\mfg}{\mathfrak{g}}
\newcommand{\mcS}{\mathcal{S}}
\newcommand{\mcI}{\mathcal{I}}
\newcommand{\mcX}{\mathcal{X}}
\newcommand{\F}{\mathbf{F}}
\newcommand{\mbi}{\mathbf{i}}
\newcommand{\mrd}{\mathop{}\!\mathrm{d}}
\newcommand{\init}{\mathcal{I}}
\def\id{\mathrm{id}}
\def\emptyset{{\centernot\Circle}}
\numberwithin{equation}{section}
\def\dash{\leavevmode\unskip\kern0.18em--\penalty\exhyphenpenalty\kern0.18em}
\def\slash{\leavevmode\unskip\kern0.15em/\penalty\exhyphenpenalty\kern0.15em}
\let\OLDthebibliography\thebibliography
\renewcommand\thebibliography[1]{
  \OLDthebibliography{#1}
  \setlength{\parskip}{0pt}
  \setlength{\itemsep}{2mm}
}
\title{Norm inflation for a non-linear heat equation with Gaussian initial conditions}
\author{
%\hspace*{5em}
Ilya Chevyrev\thanks{School of Mathematics, The University of Edinburgh, Edinburgh EH9 3FD, United Kingdom.
\href{mailto:ichevyrev@gmail.com}{\tt ichevyrev@gmail.com}}
}
\date{}
\begin{document}

\maketitle

\begin{abstract}
We consider a non-linear heat equation $\partial_t u = \Delta u + B(u,Du)+P(u)$ posed on the $d$-dimensional torus, where $P$ is a polynomial of degree at most $3$ and $B$ is a bilinear map that is not a total derivative. We show that, if the initial condition $u_0$ is taken from a sequence of smooth Gaussian fields with a specified covariance, then $u$ exhibits norm inflation with high probability. A consequence of this result is that there exists no Banach space of distributions which carries the Gaussian free field on the 3D torus and to which the DeTurck--Yang--Mills heat flow extends continuously, which complements recent well-posedness results of Cao--Chatterjee and the author with Chandra--Hairer--Shen. Another consequence is that the (deterministic) non-linear heat equation exhibits norm inflation, and is thus locally ill-posed, at every point in the Besov space $B^{-1/2}_{\infty,\infty}$; the space $B^{-1/2}_{\infty,\infty}$ is an endpoint since the equation is locally well-posed for $B^{\eta}_{\infty,\infty}$ for every $\eta>-\frac12$.
\end{abstract}

\small	
\noindent
  \textit{Keywords:} Non-linear heat equation; norm inflation; Gaussian free field; random Fourier series; ill-posedness; Yang--Mills heat flow.\\
\smallskip
  \textit{2020 Mathematics Subject Classification:} Primary: 35K05, Secondary: 35R60

%\setcounter{tocdepth}{2}       % Put subsubsections in the table of contents
%\tableofcontents

\section{Introduction}\label{sec:intro}

We consider the initial value problem for a non-linear heat equation of the form
\begin{equ}\label{eq:PDE}
\begin{cases}
\partial_t u = \Delta u + B(u, Du) + P(u) \quad &\text{ on } [0,T]\times \T^d
\\
u(0,\cdot) = u_0(\cdot) \in C^\infty(\T^d,E)\;,
\end{cases}
\end{equ}
where $\T^d = \R^d/2\pi\Z^d$ is the $d$-dimensional torus,
$E$ is a vector space (over $\R$) of dimension $2\leq \dim(E) < \infty$, $B\colon E\times E^d\to E$ is a bilinear map, $Du = (\partial_1 u,\ldots, \partial_d u)$, and $P \colon E\to E$ is a polynomial of degree at most $3$.
In what follows, we assume $B$ is \textit{not a total derivative}, i.e., 
if we write $B$ for $y=(y_1,\ldots, y_d)\in E^d$ as
\begin{equ}
B(\cdot,y) = B_1(\cdot,y_1)+\ldots + B_d(\cdot,y_d)\;,
\end{equ}
where $B_i\colon E\times E\to E$ is bilinear, then we assume that $B_i$ is not symmetric for some $1\leq i\leq d$.
We note that $B_i$ is symmetric if and only if there exists a bilinear map $\tilde B_i\colon E\times E\to E$ such that $\partial_i \tilde B_i(u,u) = B_i(u,\partial_i u)$ for all smooth $u$.
%Indeed, if such $\tilde B_i$ exists, then $\tilde B_i(u,v) + \tilde B_i(v,u)=B_i(u,v)$ for all $u,v$, which means $B_i$ is symmetric.
%Conversely, if $B_i$ is symmetric, then $\tilde B_i(u,v) \eqdef \frac12 B(u,v)$ satisfies the desired property.

It is easy to show that, for $\eta>-\frac12$, a solution $u$ to~\eqref{eq:PDE} exists for $T>0$ sufficiently small depending (polynomially) on $|u_0|_{\CC^\eta}$, where $\CC^\eta= B^{\eta}_{\infty,\infty}$ is the H\"older--Besov space of regularity $\eta$ (see Section~\ref{subsec:Besov} for a definition of $B^\alpha_{p,q}$).
Furthermore, the map $\CC^\eta \ni u_0\mapsto u$ is locally Lipschitz once the target space is equipped with a suitable norm and $T$ is taken sufficiently small on each ball in $\CC^\eta$.
We give a proof of these facts in Appendix \ref{app:large_eta}.
(If each $B_i$ were symmetric, these facts would hold for $\eta>-1$.)

The goal of this article is to prove a corresponding local ill-posedness result for a family of function spaces that carry sufficiently irregular Gaussian fields.
Leaving precise definitions for later, the main result of this article can be stated as follows.

\begin{theorem}\label{thm:main_informal}
Suppose that $X$ is an $E$-valued Gaussian Fourier series (GFS) on $\T^d$ whose Fourier truncations $\{X^N\}_{N\geq 1}$ satisfy Assumption~\ref{assump:GFS} below.
Then there exists another $E$-valued GFS $Y$, defined on a larger probability space, such that $Y\eqlaw X$ and, for every $\delta>0$,
\begin{equ}
\lim_{N\to\infty}\P\Big[\sup_{t\in [0,\delta]} \Big|\int_{\T^d}u_t(x)\mrd x\Big| > \delta^{-1}\Big] = 1\;,
\end{equ}
where $u$ is the solution to~\eqref{eq:PDE} with $u_0=X^N+Y^N$
and if $u$ blows up at or before time $\delta$, then we treat
$\sup_{t\in [0,\delta]} |\int_{\T^d}u_t(x)\mrd x| = \infty$.
\end{theorem}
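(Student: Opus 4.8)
The plan is to exploit the fact that $B$ is not a total derivative, which means there is a "bad" direction $i$ in which $B_i$ fails to be symmetric; the antisymmetric part of $B_i$ then produces a resonant, non-vanishing contribution to $\int_{\T^d} u_t(x)\,dx$ when fed a high-frequency oscillatory input, even though that input is small in the $\CC^{-1/2}$-type topology carried by the GFS. Concretely, I would first write $u = v + w$, where $v$ solves the linear heat equation with initial data $u_0 = X^N + Y^N$ (so $v_t = e^{t\Delta}(X^N+Y^N)$) and $w = u - v$ is the remainder, which solves $\partial_t w = \Delta w + B(v+w, D(v+w)) + P(v+w)$ with $w_0 = 0$. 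The quantity of interest is $\int u_t = \int v_t + \int w_t$; since $\int v_t = \int v_0 = \int(X^N+Y^N)$ stays bounded (in fact the zero mode is fixed and, by Assumption~\ref{assump:GFS}, controlled), the inflation must come from $\int w_t$, and more precisely from the first Picard iterate driven by the quadratic term, namely $\int_0^t \int_{\T^d} e^{(t-s)\Delta} B(v_s, Dv_s)\,dx\,ds = \int_0^t \int_{\T^d} B(v_s, Dv_s)\,dx\,ds$, using that $\int e^{(t-s)\Delta} f = \int f$.

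The key computation is to show that $\int_0^t \int_{\T^d} B(v_s, Dv_s)\,dx\,ds$ is large with high probability. Expanding $v_s = \sum_k e^{-s|k|^2}\hat X^N_k e^{ik\cdot x}$ (abbreviating the combined field), the spatial integral kills all but the diagonal terms $k + k' = 0$, leaving a sum over $k$ of $e^{-2s|k|^2}$ times a bilinear expression in $\hat X_k, \hat X_{-k}$ contracted through $B(\cdot, ik\,\cdot)$. Because $X$ is real-valued, $\hat X_{-k} = \overline{\hat X_k}$, and the contribution of the pair $\{k,-k\}$ to the $i$-th component involves $B_i(\hat X_k, \hat X_{-k}) (ik_i) + B_i(\hat X_{-k}, \hat X_{k})(-ik_i)$, which is exactly $ik_i$ times the \emph{antisymmetric} part of $B_i$ evaluated on $\hat X_k, \overline{\hat X_k}$ — this is where non-symmetry of $B_i$ is essential, as the symmetric part cancels by the $k \leftrightarrow -k$ symmetry. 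Choosing the doubling construction of $Y$ so that $Y^N$ reinforces $X^N$ on an appropriate high-frequency shell (this is presumably the role of passing to the enlarged probability space: $Y \eqlaw X$ but the pair $(X^N, Y^N)$ is coupled so that the relevant Fourier modes add constructively), the sum over that shell of $e^{-2s|k|^2}|k_i|\,|\hat X_k|^2$-type terms, integrated over $s \in [0,t]$, is of order (shell size)$\times$(typical $|\hat X_k|^2$)$\times |k_i| / |k|^2$; by Assumption~\ref{assump:GFS} on the covariance (barely failing $\CC^{-1/2}$ summability) this diverges as $N \to \infty$, while a second-moment / concentration argument shows the random quantity is comparable to its mean with probability tending to $1$.

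The remaining step is to control the error terms: I must show that all higher Picard iterates, as well as the cubic term $P(u)$ and the $w$-dependent parts of $B$, contribute to $\int u_t$ a quantity that is $o$ of the main term (or at least does not cancel it), on a time interval $[0,\delta]$ and on the high-probability event where $v$ is bounded in $\CC^{-1/2+\kappa}$ by $O(\log N)$ or so. For this I would run a standard fixed-point / bootstrap argument for $w$ in a space like $C([0,\delta], \CC^{\gamma})$ with $\gamma$ slightly positive, using the smoothing of the heat semigroup against the rough input $v \in \CC^{-1/2-}$; the subcriticality $\eta > -1/2$ (resp. $-1$ in the symmetric case) quoted in the introduction is precisely what makes this local theory work, but the point is that the \emph{size} of $w$ and of the second iterate is controlled by a fixed power of $|v|_{\CC^{-1/2+\kappa}}$, hence by a power of $\log N$, whereas the main term grows like a genuine power of $N$ (or of the shell parameter). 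The main obstacle — and the part requiring genuine care rather than bookkeeping — is the coupling/doubling construction producing $Y$: one needs $Y \eqlaw X$ exactly while arranging that $X^N + Y^N$ has a nearly deterministic, constructively-reinforced block of Fourier coefficients on the resonant shell whose antisymmetric $B_i$-pairing is bounded below; the rest is a deterministic PDE estimate plus a Gaussian concentration lemma.
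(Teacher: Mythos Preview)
Your overall architecture---first Picard iterate of the spatial mean diverges, remainder is controlled by a fixed-point argument---matches the paper's. The genuine gap is the construction of $Y$. Your description (``$Y^N$ reinforces $X^N$ on a high-frequency shell'', ``constructively-reinforced block of Fourier coefficients'') suggests a coupling of the form $\hat Y_k \approx \hat X_k$ on a shell, but any such coupling gives $\E\Pi_0 B(\CP_t u_0, D\CP_t u_0)=0$: writing the diagonal contribution as you do, $\sum_k e^{-2|k|^2 t}\, \mbi k_i\, \E B_i(\hat u_{0,k},\hat u_{0,-k})$, the expectation reduces (for a GFS with independent $E$-components and independent real/imaginary parts) to $\sigma^2(k)\sum_c B_i(T^c,T^c)$, and the factor $\mbi k_i$ is odd in $k$, so the sum vanishes. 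The paper's construction is different in kind: it sets $Y^b = \mathcal R X^a$, a $90^\circ$ phase rotation of one \emph{scalar} component used to define a \emph{different} component. This makes $\E[\hat X^a_k\,\overline{\hat Y^b_k}] = -\mbi\,\sigma^2(k)$ (for $k_1>0$), so the cross term $\Pi_0[B_1(\CP_t X^aT^a,\partial_1\CP_t Y^bT^b)+B_1(\CP_t Y^bT^b,\partial_1\CP_t X^aT^a)]$ has mean proportional to the nonzero vector $B_1(T^a,T^b)-B_1(T^b,T^a)$ times $\sum_{k_1>0}2k_1\sigma^2(k)e^{-2|k|^2 t}$. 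This cross-component phase relation, not reinforcement, is what defeats the parity cancellation.

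A second, related issue is your scaling. The divergent mean, integrated in time, is of order $\log N$ (indeed only $\log\log\log N$ under Assumption~\ref{assump:GFS}), not a power of $N$; your own shell estimate (shell volume $\times \sigma^2(k)\times |k_i|/|k|^2 \asymp 1$) already says each dyadic shell contributes $O(1)$. Consequently a naive fixed point for $w=u-\CP_t u_0$ using only $|u_0|_{\CC^\eta}$ will not separate signal from noise: $w$ itself contains the large constant $I_t$. The paper instead proves probabilistic uniform-in-$N$ bounds on $\CQ_t \eqdef B(\CP_t u_0,D\CP_t u_0)-H_t$ (both the non-zero-mode part and the fluctuation $Z_t-\E Z_t$), then runs the contraction for $R_t=u_t-\CP_t u_0-I_t$, choosing $T=(\log N)^{-M}$ so that the feedback term $B(I_s,D\CP_s u_0)$, of size $\sim\log N$, is suppressed by $T^\kappa$.
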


We formulate a more general and precise statement in Theorem~\ref{thm:main} below.
See the end of Section~\ref{sec:prelim} for a description of the proof and of the structure of the article.
The definition of an $E$-valued GFS is given in Section~\ref{subsec:GFS}.

\begin{remark}
The Gaussian free field $X$ on $\T^d$ with $\E|X_k|^2 = |k|^{-2}$ for $k\neq 0$ (see e.g.~\cite{Sheffield_GFF}) is a GFS (for any $d\geq 1$) and $X$ satisfies
Assumption~\ref{assump:GFS} for $d=3$ (but not other values of $d$).
\end{remark}

There is considerable interest in studying partial differential equations (PDEs) with random initial conditions in connection to local and global well-posedness;
for dispersive PDEs, see for instance~\cite{Bourgain94, Burq_Tzvetkov_08,Oh_Pocovnicu_16,Pocovnicu17} and the review article~\cite{BOP19}; for recent work on parabolic PDEs, see~\cite{Sourav_flow, CCHS22, Hairer_Le_Rosati_22}.
Furthermore, there have been many developments in the past decade in the study of \textit{singular stochastic} PDEs with both parabolic~\cite{Hairer14,GIP15,BCCH21} and dispersive~\cite{GKO18,GKOT21,Tolomeo21} dynamics, for which it is often important to understand the effect of irregularities of (random) initial condition.

In~\cite{Sourav_flow,CCHS22}, it is shown that~\eqref{eq:PDE} is locally \textit{well-posed}\footnote{Strictly speaking, only the DeTurck--Yang--Mills heat flow, which is a special form of~\eqref{eq:PDE}, was considered in~\cite{Sourav_flow,CCHS22} but the methods therein apply to the general form of~\eqref{eq:PDE}; furthermore, only mollifier approximations were considered in~\cite{CCHS22},
but the methods apply to Fourier truncations.}
for the GFF $X$ on $\T^3$, in the sense that, if $u_0 = X^N$, then $u$ converges in probability as $N\to\infty$ to a continuous process with values in $\CC^{\eta}$ for $\eta<-1/2$, at least over short random time intervals; we discuss these works in more detail at the end of this section.
Theorem~\ref{thm:main_informal} therefore shows the importance of taking precisely the GFF $X$ in these works, rather than another Gaussian field which has the same regularity as $X$ when measured in any normed space.
We note that $Y$ in Theorem~\ref{thm:main_informal} can therefore clearly \textit{not} be taken independent of $X$.

We now state a corollary of Theorem~\ref{thm:main_informal} and of the construction of $Y$ which elaborates on this last point and is of analytic interest.
Consider a Banach space $(\init,\|\cdot\|)$ of distributions with (continuous) inclusions $C^\infty\subset \init \subset \mcS'(\T^d,\R)$.
Let
\begin{equ}
(\hat\init,\|\cdot\|)\;,\qquad \hat\init\subset \mcS'(\T^d,E)\;,
\end{equ}
denote the Banach space of all $E$-valued distributions of the form $\sum_{a\in A} x^aT^a$, where each $x^a\in \init$ and $\{T^a\}_{a\in A}$ is some fixed basis of $E$, with norm $\|\sum_{a\in A} x^aT^a\| \eqdef \sum_a \|x^a\|$.
We say that there is norm inflation for~\eqref{eq:PDE} at $x\in \hat\init$ if, for every $\delta>0$, there exists a solution $u$ to~\eqref{eq:PDE} such that
\begin{equ}\label{eq:norm_inflation}
\|x-u_0\|<\delta\qquad \text{ and } \qquad \Big|\int_{\T^d} u_t(x)\mrd x\Big|> \delta^{-1}\;.
\end{equ}
This notion of norm inflation is slightly stronger than the usual one introduced by Christ--Colliander--Tao~\cite{CCT03} in their study of the wave and Schr\"odinger equations,
for which only $x=0$ and $\|u_t\| > \delta^{-1}$ are considered in~\eqref{eq:norm_inflation}.
Norm inflation (in the sense that $\|u_t\| > \delta^{-1}$) at arbitrary points $x\in\hat\mcI$ was shown by Xia~\cite{Xia21} for the non-linear wave equation below critical scaling.
Oh~\cite{Oh17} showed that there is norm inflation for the cubic non-linear Schr\"odinger equation at every point in negative Sobolev spaces at and below the critical scaling.

Norm inflation captures a strong form of ill-posedness of~\eqref{eq:PDE}:~\eqref{eq:norm_inflation} in particular shows that the solution map $C^\infty\ni u_0 \mapsto u\in C([0,T],\mcS')$ is discontinuous at $x$ in $\hat\init$, even locally in time.
See~\cite{Iwabuchi_Ogawa_15,Carles_Kappeler_17,Oh_Wang_18, Kishimoto19} and~\cite{BP08,Cheskidov_Dai_14, Molinet_Tayachi_15} where norm inflation of various types is shown for dispersive and parabolic PDEs respectively.

Consider now a family of non-negative numbers $\sigma^2 = \{\sigma^2(k)\}_{k\in\Z^d}$.
We say that $\init$ \textit{carries the GFS with variances $\sigma^2$} if
\begin{equ}\label{eq:GFS_conv}
\lim_{N\to\infty}\|X-X^N\| = 0 \quad \text{in probability}
\end{equ}
whenever $\{X_k\}_{k\in\Z^d}$ is a family of (complex) Gaussian random variables such that $\E|X_k|^2=\sigma^2(k)$ and $X^N \eqdef \sum_{|k|\leq N} X_k e^{\mbi\scal{k,\cdot}}$
is a real GFS for every $N\geq 1$ (see Definition~\ref{def:GFS}), and
where $X$ is a real GFS.
\begin{example}
For $d=1$ and $\eta<-\frac12$, $\init\eqdef\CC^\eta$ carries the GFS with variances $\sigma^2\equiv 1$, which corresponds to white noise on $\T^1$ and which satisfies Assumption~\ref{assump:GFS}.
\end{example}

\begin{corollary}\label{cor:norm_inflation}
Suppose $\init$ carries the GFS with variances $\sigma^2= \{\sigma^2(k)\}_{k\in\Z^d}$ satisfying the bounds in Assumption~\ref{assump:GFS}.
Suppose that the law of $X$ in~\eqref{eq:GFS_conv} has full support in $\init$.
Then there is norm inflation for~\eqref{eq:PDE} at every point in $\hat\init$.
\end{corollary}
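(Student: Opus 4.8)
The corollary should follow by combining Theorem~\ref{thm:main_informal} with the full-support hypothesis and a simple union-bound / second-moment argument to ensure the relevant probabilities do not sum to something vacuous. Fix $x\in\hat\init$ and $\delta>0$. First I would reduce to the scalar case: writing $x=\sum_{a\in A}x^aT^a$ with $x^a\in\init$, it suffices to find, for each $a$, a smooth perturbation of $x^a$ that is $\delta/|A|$-close in $\|\cdot\|$ and produces the required inflation of $|\int_{\T^d}u_t\,\mrd x|$; one combines these componentwise, using that $B$ is fixed and that the norm on $\hat\init$ is the $\ell^1$-sum of the component norms. (In fact one should check that it is enough to perturb in a \emph{single} distinguished direction $T^a$ for which the relevant $B_i$ is non-symmetric — this is how Theorem~\ref{thm:main_informal} will be invoked — so the reduction is really to producing one scalar perturbation.)

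The core step is: given the target $x\in\hat\init$ and $\delta>0$, produce a smooth $u_0$ with $\|x-u_0\|<\delta$ such that the solution $u$ to~\eqref{eq:PDE} satisfies $\sup_{t\in[0,\delta]}|\int u_t\,\mrd x|>\delta^{-1}$. I would build $u_0$ as $u_0 = x_\varepsilon + (X^N+Y^N)$, where $x_\varepsilon$ is a smooth mollification of $x$ chosen so that $\|x-x_\varepsilon\|<\delta/2$ (possible since $C^\infty$ is dense in $\init$ — or, if it is not, since by the full-support assumption there is \emph{some} smooth field arbitrarily close to $x$, and then translate the whole construction by it), and $X^N+Y^N$ is the random Fourier perturbation from Theorem~\ref{thm:main_informal}. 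The point of the full-support hypothesis is exactly this: it guarantees that $\P[\|X\|<\delta/4]>0$, hence (for $N$ large, using~\eqref{eq:GFS_conv}) $\P[\|X^N\|<\delta/4]$ and likewise $\P[\|Y^N\|<\delta/4]$ are bounded below by a fixed positive constant $c=c(\delta)>0$; since $Y\eqlaw X$, one gets $\P[\|X^N+Y^N\|<\delta/2]\geq$ some positive bound for infinitely many $N$ — here one must be slightly careful because $Y$ is \emph{not} independent of $X$, so I would either keep the two events' probabilities separately bounded below and intersect with the inflation event from Theorem~\ref{thm:main_informal} (whose probability $\to1$), or absorb $X^N$ into $x_\varepsilon$ and only control $\|Y^N\|$. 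Then on the intersection of $\{\|X^N+Y^N\|<\delta/2\}$ with the Theorem~\ref{thm:main_informal} inflation event, the initial condition $u_0=x_\varepsilon+X^N+Y^N$ satisfies $\|x-u_0\|\leq\|x-x_\varepsilon\|+\|X^N+Y^N\|<\delta$, while linearity of the equation in the relevant sense — more precisely, the fact that Theorem~\ref{thm:main_informal} is about the solution started from $X^N+Y^N$ and that adding the smooth bounded perturbation $x_\varepsilon$ changes $\sup_{t\le\delta}|\int u_t\,\mrd x|$ by a controlled amount — gives the inflation. For the last point I would run the argument of Theorem~\ref{thm:main_informal} directly with $u_0=x_\varepsilon+X^N+Y^N$ rather than quote its conclusion as a black box, since the smooth low-frequency part $x_\varepsilon$ enters the energy-type estimate for $\int u_t\,\mrd x$ only through lower-order, deterministically bounded terms; alternatively, invoke the more general Theorem~\ref{thm:main} (referenced in the excerpt), which presumably already allows an additional smooth summand in $u_0$.

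Finally I would assemble: since the inflation event has probability $\to1$ and the smallness event $\{\|Y^N\|<\delta/2\}$ has probability bounded below by $c(\delta)>0$ along a subsequence, their intersection is nonempty for some $N$; any sample point in it gives a smooth $u_0$ witnessing norm inflation at $x$. As $\delta>0$ was arbitrary, norm inflation holds at $x$, and as $x\in\hat\init$ was arbitrary, at every point of $\hat\init$.

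\textbf{Main obstacle.} The delicate point is the lack of independence between $X$ and $Y$ in Theorem~\ref{thm:main_informal}: one cannot simply multiply "probability of smallness" by "probability of inflation." The cleanest fix is to fold $X^N$ into the smooth target (i.e.\ apply the theorem conditionally / pathwise so that only $\|Y^N\|$ needs to be small, using $Y\eqlaw X$ and full support to lower-bound $\P[\|Y^N\|<\delta/2]$), and to verify that the energy estimate driving the inflation is insensitive to adding a fixed smooth function to the initial data — both of which are minor modifications of the proof of Theorem~\ref{thm:main_informal} rather than genuinely new input.
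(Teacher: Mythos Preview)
Your strategy is sound, but the paper's proof is shorter and sidesteps exactly the obstacle you flag. Rather than writing $u_0=x_\varepsilon+(\text{small Gaussian})$ and then re-running the inflation argument with an extra smooth summand, the paper observes that $Z\eqdef X+Y$ is itself a centred Gaussian on $\hat\init$ with \emph{full support}. (The only non-obvious component is $Z^b=X^b+\CR X^a$, which is correlated with $Z^a=X^a+Y^a$; but since $X^b$ is independent of all other $X^c$ and of $Y$, $Z^b$ has full support even conditionally on the remaining components.) Full support of $Z$ at the target $x$---not at $0$---gives $\eps\eqdef\P[\|Z-x\|<\delta/2]>0$; combined with $\|Z^N-Z\|\to0$ in probability, one gets $\P[\|Z^N-x\|<\delta]>\eps/2$ for all large $N$. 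Intersecting with the inflation event of Theorem~\ref{thm:main}, whose probability tends to $1$, yields a sample point where $u_0\eqdef Z^N=X^N+Y^N$ is simultaneously $\delta$-close to $x$ and produces the inflation. Thus Theorem~\ref{thm:main} is applied as a black box with no modification, and your dependence worry between $X$ and $Y$ never arises because one works with the single Gaussian $Z$ throughout.

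Your route could be completed---the cross terms $B(\CP_t x_\varepsilon, D\CP_t Z^N)$ and $B(\CP_t Z^N, D\CP_t x_\varepsilon)$ are indeed harmless since $x_\varepsilon$ is fixed smooth and $|Z^N|_{\CC^\eta}$ has uniformly bounded moments---but it costs a repetition of the deterministic fixed-point argument (Lemma~\ref{lem:deterministic}) with additional terms, plus the separate step of producing a smooth $x_\varepsilon$ near $x$. The paper replaces all of this by the two-line observation that $Z$ has full support at $x$.
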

The proof of Corollary~\ref{cor:norm_inflation} is given in Section~\ref{subsec:main_results}.
We note that a simple criterion for the law of $X$ to have full support in $\init$ is that every $\sigma^2(k)>0$ and that the smooth functions are dense in $\init$ (see Remark~\ref{rem:full_support}).

Note that the Besov space $\CC^{-1/2}=B^{-1/2}_{\infty,\infty}$ is an `endpoint' case since~\eqref{eq:PDE} is well-posed on $\CC^{\eta}$ for every $\eta>-\frac12$.
In Proposition~\ref{prop:Besov_reg}\ref{pt:-1} below, we show that $\CC^{-1/2}$
carries a GFS with variances $\sigma^2$ satisfying the bounds in Assumption~\ref{assump:GFS}.
Corollary~\ref{cor:norm_inflation}, together with Remark~\ref{rem:full_support} and Proposition~\ref{prop:Besov_reg}\ref{pt:-1}, therefore gives a probabilistic proof of the following result, which appears to be folklore.
\begin{corollary}\label{cor:endpoint}
There is norm inflation for~\eqref{eq:PDE} at every point in $\CC^{-1/2}(\T^d,E)$.
\end{corollary}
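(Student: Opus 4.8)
The plan is to derive Corollary~\ref{cor:endpoint} as an immediate consequence of Corollary~\ref{cor:norm_inflation} once we verify its hypotheses for $\init = \CC^{-1/2}$. There are exactly two things to check: first, that $\CC^{-1/2}$ carries a GFS whose variances $\sigma^2 = \{\sigma^2(k)\}_{k\in\Z^d}$ satisfy the bounds in Assumption~\ref{assump:GFS}; and second, that the corresponding Gaussian field $X$ has full support in $\CC^{-1/2}$.

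For the first point I would invoke Proposition~\ref{prop:Besov_reg}, which is stated (in the excerpt) to provide precisely such variances: it exhibits a choice of $\sigma^2$ for which $\CC^{-1/2}$ carries the GFS and for which the quantitative bounds of Assumption~\ref{assump:GFS} hold. Concretely one expects $\sigma^2(k) \asymp |k|^{-(d-1)}$ (so that the GFS sits at Besov regularity $-1/2$ but no better, matching the role of the GFF on $\T^3$), and the content of Proposition~\ref{prop:Besov_reg} is the Littlewood--Paley/Kolmogorov-type estimate showing $\|X - X^N\|_{\CC^{-1/2}}\to 0$ in probability together with verification of the decay and non-degeneracy conditions in Assumption~\ref{assump:GFS}. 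I would simply cite it.

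For the second point I would use Remark~\ref{rem:full_support}: since the variances produced by Proposition~\ref{prop:Besov_reg} are strictly positive for every $k$ (this is part of what makes the field genuinely rough at regularity $-1/2$), and since $C^\infty(\T^d)$ is dense in $\CC^{-1/2} = B^{-1/2}_{\infty,\infty}$ — here one should be slightly careful, because smooth functions are \emph{not} dense in $B^\alpha_{\infty,\infty}$ in general, so one works with the closure of smooth functions, or equivalently the little-Hölder-type space, which is what ``$\init$'' tacitly refers to throughout; alternatively one notes that the support statement in Remark~\ref{rem:full_support} is exactly tailored to this situation — the law of $X$ has full support in $\init$. Combining, all hypotheses of Corollary~\ref{cor:norm_inflation} are met, and it yields norm inflation for~\eqref{eq:PDE} at every point of $\hat\init$, in particular at every point of $\CC^{-1/2}(\T^d,E)$ (identifying $\hat\init$ with the $E$-valued version of $\init$ via a fixed basis of $E$).

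The only genuine subtlety — and the single point I would flag as the ``main obstacle'' — is the density/support issue just mentioned: the honest statement involves the closed subspace of $B^{-1/2}_{\infty,\infty}$ generated by smooth functions rather than the whole space, and one must make sure the norm-inflation conclusion transfers to an arbitrary $x\in \CC^{-1/2}$ and not merely to $x$ in that subspace. But this is harmless: given arbitrary $x\in\CC^{-1/2}$ and $\delta>0$, one may first approximate $x$ by a smooth $\tilde x$ with $\|x-\tilde x\| < \delta/2$, apply Corollary~\ref{cor:norm_inflation} at $\tilde x$ with parameter $\delta/2$ to obtain a solution $u$ with $\|\tilde x - u_0\| < \delta/2$ and $|\int_{\T^d} u_t(x)\mrd x| > 2/\delta > 1/\delta$, and conclude $\|x - u_0\| < \delta$ by the triangle inequality. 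Hence Corollary~\ref{cor:endpoint} follows, and the deterministic, folklore statement is thereby obtained through an entirely probabilistic route.
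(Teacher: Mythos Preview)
Your approach is correct and matches the paper's: the corollary is obtained by combining Corollary~\ref{cor:norm_inflation}, Remark~\ref{rem:full_support}, and Proposition~\ref{prop:Besov_reg}\ref{pt:-1} (with $\gamma=-d+1$, $\theta=-1$, $\eta<0$, so that $\sigma^2(k)\asymp |k|^{-d+1}(\log|k|)^{-1}(\log\log|k|)^{\eta}$ satisfies Assumption~\ref{assump:GFS}).

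One clarification, however, about the ``main obstacle'' you flag. In this paper (Section~\ref{subsec:Besov}) the space $B^\alpha_{p,q}$ is \emph{defined} as the completion of $C^\infty(\T^d)$ under the Besov norm, so smooth functions are dense in $\CC^{-1/2}$ by definition and Remark~\ref{rem:full_support} applies directly; no workaround is needed. More to the point, your proposed workaround would not actually extend the result to the ``big'' Besov space: the step ``approximate $x$ by a smooth $\tilde x$ with $\|x-\tilde x\|<\delta/2$'' is exactly the density statement you are trying to circumvent, so the argument is circular for $x$ outside the closure of smooth functions. The corollary as stated concerns only the paper's $\CC^{-1/2}$, and for that space the density issue simply does not arise.
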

We remark that $\CC^{-1}$ is the scaling critical space for~\eqref{eq:PDE}, so our results show ill-posedness above scaling criticality.
Norm inflation of the same type as~\eqref{eq:norm_inflation} in $\CC^{-2/3}$
was shown for the cubic non-linear heat equation (NLH) $\partial_t u= \Delta u \pm u^3$ by the author and Oh--Wang~\cite{COW22}
using a Fourier analytic approach taking its roots in~\cite{Iwabuchi_Ogawa_15, Oh17, Kishimoto19}.
The method of~\cite{COW22} can likely be adapted to yield norm inflation for~\eqref{eq:PDE} in $\CC^{-1/2}$, and we expect that the methods of this article can similarly be adapted to show (probabilistic) norm inflation for the cubic NLH.
We remark, however, that our method appears not to reach
$B^{-1/2}_{\infty,q}$ for $q<\infty$ (see Proposition~\ref{prop:Besov_reg}\ref{pt:<-1}),
while the method in~\cite{COW22} for the cubic NLH does extend to $B^{-2/3}_{\infty,q}$ for $q>3$.

Corollary~\ref{cor:endpoint} should be contrasted with the 3D Navier--Stokes equations (NSE) for which norm inflation (in a slightly weaker sense than~\eqref{eq:norm_inflation}) was shown in the scaling critical space $\CC^{-1}$ by Bourgain--Pavlovi{\'c}~\cite{BP08} and which is locally well-posed in $\CC^\eta$ for $\eta>-1$ (the main difference with~\eqref{eq:PDE} is that the non-linearity in NSE \textit{is} a total derivative).
In fact, norm inflation for NSE has been shown in $B^{-1}_{\infty,q}$ for $q>2$ by Yoneda~\cite{Yoneda10} and for all $q\in [1,\infty]$ by Wang~\cite{Wang15}.
We remark that our analysis (and generality of results) relies on the fact that any space carrying $X$ as in Theorem~\ref{thm:main_informal}, in particular $\CC^{-1/2}$, is scaling \textit{subcritical}.
See also~\cite{Choffrut_Pocovnicu_18} and~\cite{Forlano_Okamoto_20} where norm inflation is established for the fractional non-linear Schr{\"o}dinger (NLS) and non-linear wave (NLW) equations respectively above the critical scaling.

In \cite{SunTz_20_norm_inf} and \cite{Camps_Gassot_23}, norm inflation is shown for the NLW and NLS respectively for all initial data $u_0$ belonging to a dense $G_\delta$ subset of the scaling supercritical Sobolev space and where the approximation $x$ is taken as a mollification of $u_0$.
These works in particular imply a statement similar to Corollary \ref{cor:endpoint} for the NLW and NLS but with a more precise description of the approximating sequence that exhibits norm inflation (cf. \cite{Oh17,Xia21}).

We finish the introduction by describing one of the motivations for this study.
The author and Chandra--Hairer--Shen in~\cite{CCHS20,CCHS22} analysed the stochastic quantisation equations of the Yang--Mills (YM) and YM--Higgs (YMH) theories on $\T^2$ and $\T^3$ respectively (see also the review article~\cite{Chevyrev22_YM});
we also mention that
Bringmann-Cao~\cite{BC23} analysed the YM stochastic quantisation equations on $\T^2$ by means of para-controlled calculus (vs. regularity structures as in \cite{CCHS20}), and that the invariance of the YM measure on $\T^2$ for this dynamic was shown in \cite{ChevyrevShen23}. 
In~\cite{CCHS22}, the authors in particular constructed a candidate state space for the YM(H) measure on $\T^3$.
A related construction was also proposed by Cao--Chatterjee~\cite{Sourav_state,Sourav_flow}.
An ingredient in the construction of~\cite{CCHS22} is a metric space $\init$ of distributions such that the solution map of the
DeTurck--YM(H) heat flow (or of any equation of the form~\eqref{eq:PDE}, see~\cite[Prop.~2.9]{CCHS22}) extends continuously locally in time to $\init$
and such that suitable smooth approximations of the GFF on $\T^3$ converge in $\init$; essentially the same space was identified in~\cite{Sourav_flow}.
The works~\cite{Sourav_flow,CCHS22} thus
provide a local \textit{well-posedness} result for~\eqref{eq:PDE} with the GFF on $\T^3$ as initial condition.
%In particular, by gauge-covariance of the DeTurck--YM(H) flow, smoothened gauge-invariant observables are well-defined and continuous on $\init$.

%The state spaces in these works were defined through a quadratic form (which suggests they are non-linear, although even this was not shown).
In contrast, our results provide a corresponding \textit{ill-posedness} result. Indeed, the GFF on $\T^3$ satisfies Assumption~\ref{assump:GFS} and the
DeTurck--YM(H) heat flow is an example of equation~\eqref{eq:PDE} covered by our results (see Examples~\ref{ex:YM_flow} and~\ref{ex:YMH_flow}).
Theorem~\ref{thm:main_informal} and Corollary~\ref{cor:norm_inflation} therefore imply
that the metric space $\init$ in~\cite{CCHS22} is not a vector space, and, more importantly, that this situation is unavoidable.
That is, there exists no Banach space
of distributions which carries the GFF on $\T^3$ and admits a continuous extension of the DeTurck--YM(H) heat flow.
Since the 3D YM measure (at least in a regular gauge) is expected to be a perturbation of the standard 3D GFF,
this suggests that every sensible state space for the 3D YM(H) measure is necessarily non-linear (cf.~\cite{Chevyrev19YM,CCHS20} in which natural \textit{linear} state spaces for the 2D YM measure were constructed).

A non-existence result in the same spirit was shown for iterated integrals of Brownian paths by Lyons~\cite{TerryPath} (see also~\cite[Prop.~1.29]{Lyons07} and~\cite[Prop.~1.1]{FrizHairer20});
the construction of the field $Y$ in Theorem~\ref{thm:main_informal} is inspired by a similar one in~\cite{TerryPath}.

\section{Preliminaries and main result}\label{sec:prelim}

\subsection{Fourier series and Besov spaces}
\label{subsec:Besov}

We recall the definition of Besov spaces that we use later.
Thorough references on this topic include~\cite{BookChemin,Triebel};
see also~\cite[Appendix~A]{GIP15} and~\cite[Appendix~A]{MW17_Phi43} for concise summaries.
Let $\chi_{-1},\chi \in C^\infty(\R^d,\R)$ take values in $[0,1]$ such that $\supp \chi_{-1}\subset B_0(4/3)$ and $\supp \chi\subset B_0(8/3)\setminus B_0(3/4)$, where $B_x(r)=\{y\in\R^d\,:\,|x-y|\leq r\}$, and
$
\sum_{\ell=-1}^\infty \chi_\ell = 1
$,
where $\chi_\ell \eqdef \chi(2^{-\ell}\cdot)$ for $\ell\geq 0$.

Let $\mbi=\sqrt{-1}$ and, for $k\in\Z^d$, let $e_k = e^{\mbi \scal{k,\cdot}}\in C^\infty(\T^d)$ denote the orthonormal Fourier basis; here and below, we equip $\T^d = \R^d/2\pi\Z^d$ with the normalised Lebesgue measure of total mass $1$.
For $f\in \mcS'(\T^d)$ denote $f_k = \scal{f,e_k} = \int_{\T^d} f(x) e_{-k}(x)\mrd x$
and, for
$\ell\geq -1$, define
$\Delta_\ell f\in C^\infty(\T^d)$ by
\begin{equ}
\Delta_\ell f = \sum_{k\in\Z^d} \chi_\ell(k) f_k e_k \;.
\end{equ}
For $s\in \R$ and $1\leq p,q \leq \infty$, we define the Besov norm of $f\in C^\infty(\T^d)$ by
\begin{equ}
|f|_{B^\alpha_{p,q}} \eqdef  \Big(\sum_{\ell\geq -1} 2^{\alpha \ell q}|\Delta_\ell f|_{L^p(\T^d)}^q\Big)^{1/q} < \infty\;.
\end{equ}
We let $B^\alpha_{p,q}$ be the space obtained by completing $C^\infty(\T^d)$ with this norm, which one can identify with a subspace of distributions $\mcS'(\T^d)$.
We use the shorthand $\CC^\alpha = B^\alpha_{\infty,\infty}$.

We let $\CP_t = e^{t\Delta}\colon \mcS'(\T^d) \to C^\infty(\T^d)$ for $t>0$ denote the heat flow
(with $\CP_0=\id$ as usual).
In particular, $\CP_t e_k = e^{-|k|^2 t} e_k$ for all $k\in\Z^d$.
We denote by $\Pi_N\colon \mcS'(\T^d) \to C^\infty(\T^d)$ the Fourier truncation operator
\begin{equ}
\Pi_N\xi = \sum_{|k|\leq N} \xi_k e_k\;.
\end{equ}

\subsection{Assumptions on the equation}\label{subsec:assumps_equ}

Consider $E,B,P$ as in Section~\ref{sec:intro}.
Without loss of generality, we will assume $B_1$ is not symmetric
and henceforth make the following assumption.
%We write $B$ for $y=(y_1,\ldots, y_d)\in E^d$ as
%\begin{equ}
%B(\cdot,y) = B_1(\cdot,y_1)+\ldots + B_d(\cdot,y_d)\;,
%\end{equ}
%where $B_i\colon E\times E\to E$ is bilinear.
%In the following, we will always assume that $B_i$ is not symmetric for some $1\leq i\leq d$;
%in fact, without loss of generality,
%we assume $B_1$ is not symmetric.
%Explicitly, we make the following assumption.

\begin{assumption}\label{assump:not-deriv}
Fix a basis $\{T^a\}_{a\in A}$ of $E$.
There exist $a\neq b \in A$ such that $B_1(T^a,T^b) \neq B_1(T^b,T^a)$.
\end{assumption}

The next two examples show that our results cover the DeTurck--YM(H) heat flow.

\begin{example}[Yang--Mills heat flow with DeTurck term]\label{ex:YM_flow}
Let $E=\mfg^d$, where $\mfg$ is a non-Abelian finite-dimensional Lie algebra with Lie bracket $[\cdot,\cdot]$.
We write elements of $E$ as $X=\sum_{j=1}^d X^j\mrd x^j$, $X^j\in\mfg$.
The DeTurck--YM heat flow is
\begin{equ}
\partial_t X^j = \Delta X^j + \sum_{i=1}^d[X^i,2\partial_i X^j - \partial_j X^i + [X^i,X^j]]\;,\quad 1\leq j\leq d\;.\label{eq:DYM}\tag{DYM}
\end{equ}
To bring this into the form~\eqref{eq:PDE},
we write elements of $E^d$ as $(\partial_1 Y,\ldots, \partial_d Y)\in E^d$, $\partial_i Y = \sum_j \partial_i Y^j\mrd x^j \in E$, $\partial_i Y^j \in \mfg$.
Define
$B\colon E\times E^d\to E$ by
\begin{equ}
B(X,Y)=\sum_{i,j=1}^d[X^i, 2\partial_i Y^j -\partial_j Y^i] \mrd x^j\;,
\end{equ}
so that the corresponding $B_i\colon E\times E\to E$ is
\begin{equ}
B_i(X,Y) = \sum_{j=1}^d  [X^i,2Y^j - \delta_{i,j} Y^j ]\mrd x^j\;.
\end{equ}
Then $B_i$ is not symmetric for every $1\leq i\leq d$
since $B_i(X,Y) = [X^i, Y^i]\mrd x^i + \sum_{j\neq i} c_j\mrd x^j$ for some $c_j\in\mfg$ and $[X^i, Y^i]$ is \textit{anti-symmetric} and non-zero for some $X,Y\in E$ since $\mfg$ is non-Abelian.
Equation~\eqref{eq:DYM} therefore satisfies Assumption~\ref{assump:not-deriv}.
\end{example}

\begin{example}\label{ex:YMH_flow}
The same example as above shows that the DeTurck--YM--Higgs heat flow (with or without the cube of the Higgs field, see~\cite[Eq.~(1.9) resp.~(2.2)]{CCHS22})
satisfies Assumption~\ref{assump:not-deriv}.
\end{example}

\begin{example}
If $\dim(E)=1$, then Assumption~\ref{assump:not-deriv} is never satisfied.
\end{example}

%\begin{example}
%The quadratic form in the Navier--Stokes equation does not satisfy Assumption~\ref{assump:not-deriv}.
%\end{example}

\subsection{Gaussian Fourier series}\label{subsec:GFS}

\begin{definition}\label{def:GFS}
A \textit{complex Gaussian Fourier series (GFS)} is an $\mcS'(\T^d,\C)$-valued random variable $X$ such that $\{X_k\}_{k\in \Z^d}$ is a family of complex Gaussians
with $\E X_k^2 = 0$ for all $k \neq 0$ and $X_k$ and $X_n$ are independent for all $k,n\in\Z^d$ such that $k\notin \{-n,n\}$.
Here, as before, we denote $X_k\eqdef \scal{X,e_k}$.
A \textit{real GFS} is a complex GFS 
$X$ such that $X_{-k}=\overline X_k$ for all $k\in \Z^d$.
An \textit{$E$-valued GFS} is an $\mcS'(\T^d,E)$-valued random variable such that $X = \sum_{a\in A}X^aT^a$ with $\{X^a\}_{a\in A}$ a family of independent real GFSs.
\end{definition}
The sequence $\{\E|X_k|^2\}_{k\in \Z^d}$ uniquely determines the law of a real GFS $X$.
Conversely, a real GFS can be constructed from any sequence $\{\sigma^2(k)\}_{k\in \Z^d}$ with polynomial growth by taking a set $\mfK\subset \Z^d$ such that $\mfK\cap(-\mfK) = \emptyset$ and $\mfK\cup (-\mfK) = \Z^d\setminus\{0\}$,
defining $\{X_{k}\}_{k\in \mfK}$ as a family of independent complex Gaussians with $\E|X_k|^2=\sigma^2(k)$ and $\E X_k^2=0$, and setting $X_{-k}=\overline X_k$ for all $k \in \mfK$,
and $X_0$ as a real Gaussian with $\E|X_0|^2=\sigma^2(0)$.
\begin{assumption}\label{assump:GFS}
Suppose $\{X^N\}_{N\geq 1}$ is a family of $E$-valued GFSs such that\footnote{For $k\in\Z^d$, we use the shorthand $\log k=\log |k|$ and $k^\gamma=|k|^\gamma$.}
\begin{equ}
C^{-1} k^{-d+1}|\log k|^{-1}|\log\log k|^{-1} \leq \sigma^2_N(k) \eqdef \E|X^N_k|^2 \leq C k^{-d+1}\;,
\end{equ}
for all $k_0 \leq |k|\leq N$, and $\sigma^2_N(k) \leq C$ for $0\leq |k|<k_0$, where $k_0,C>0$
are independent of $N$,
and $\sigma^2(k)=0$ for $|k|>N$. 
\end{assumption}
\begin{example}
Let $X$ be an $E$-valued GFS with $\E|X_k|^2 = k^{-d+1}$ for $k\neq 0$.
Then $X^N\eqdef \Pi_N X$ satisfies Assumption~\ref{assump:GFS}.
\end{example}
\begin{remark}
The upper bound $\sigma^2_N(k) \leq C k^{-d+1}$ in Assumption~\ref{assump:GFS} can be relaxed to $\sigma^2_N(k) \leq C k^\gamma$ for $\gamma\in [-d+1,-d+1+\eps]$ for some sufficiently small $\eps>0$ without changing the statement of Theorem~\ref{thm:main} below.
We restrict to $\gamma=-d+1$ only for simplicity.
\end{remark}

\begin{lemma}\label{lem:Besov_moments}
Let $\eta<-\frac12$ and
suppose that the upper bound on $\sigma^2_N(k)$ in Assumption~\ref{assump:GFS} holds.
Then
$\sup_N \E |X^N|^p_{\CC^\eta}<\infty$ for all $p\in[1,\infty)$.
\end{lemma}

\begin{proof}
This follows from a standard Kolmogorov-type argument (similar to and simpler than the proof of Lemma~\ref{lem:non_zero_modes}).
It also follows directly from the sharper result of Proposition \ref{prop:Besov_reg}\ref{pt:<-1}
combined with the obvious embedding $B^{\alpha}_{p,q} \hookrightarrow B^{\alpha}_{p,\infty}$.
\end{proof}
\begin{remark}
The logarithmic factors in Assumption~\ref{assump:GFS} are considered to allow for endpoint cases; we will see in Proposition~\ref{prop:Besov_reg} that they allow for $X^N \eqdef \Pi_N X$ to converge to a GFS $X$ in $\CC^{-1/2}$, which would not be the case without these factors.
This allows us to show norm inflation for~\eqref{eq:PDE} in $\CC^{-1/2}$ (Corollary~\ref{cor:endpoint}).
If these logarithmic factors are dropped, then
$|\Pi_0 u_T| > c\log\log\log N$
in~\eqref{eq:prob_norm_inflation} can be replaced by $\inf_{t\in [N^{-2+\eps},T]}|\Pi_0 u_t| > c\log N$ for any $\eps>0$.
\end{remark}
\subsection{Main result}\label{subsec:main_results}

\begin{definition}
For a distribution $\xi\in\mcS'(\T^d,\C)$, we define $\CR\xi \in \mcS'(\T^d,\C)$ as the unique distribution such that, for all $k\in\Z^d$,
\begin{equ}
(\CR\xi)_k\eqdef  \scal{\CR\xi,e_k}
=
\begin{cases}
\mbi\xi_k &\quad \textnormal{ if } k_1>0\;,\\
-\mbi\xi_{k} &\quad \textnormal{ if } k_1<0\;,\\
\xi_k &\quad \textnormal{ if } k_1=0\;.
\end{cases}
\end{equ}
\end{definition}
If $\xi$ satisfies the reality condition $\xi_{-n}=\overline{\xi_n}$, then so does $\CR\xi$.
Furthermore, if $X$ is a real GFS, then so is $\CR X$ and $\CR X\eqlaw X$.
Recall now Assumption~\ref{assump:not-deriv}.

\begin{definition}\label{def:Y}
For an $E$-valued GFS $X$, we define another $E$-valued GFS $Y=\sum_{c\in A} Y^cT^c$ by setting, for each $c\neq b$, $Y^c \eqlaw X^c$ and independent of $X$, and setting $Y^b = \CR X^a$.
\end{definition}
Note that $Y$ can be defined on a larger probability space than that of $X$ and that $Y\eqlaw X$ by construction. The following is the main result of this article.
\begin{theorem}\label{thm:main}
Suppose $B$ satisfies Assumption~\ref{assump:not-deriv} and $\{X^N\}_{N\geq 1}$ satisfies Assumption~\ref{assump:GFS}.
Define $Y^N$ as in Definition~\ref{def:Y}, $u_0 \eqdef X^N + Y^N$, and let $u$ be the solution to~\eqref{eq:PDE}.
Then there exist $M,c>0$ such that, for every $\eps>0$, if we set $T = (\log N)^{-M}$ for $N\geq 2$,
\begin{equ}\label{eq:prob_norm_inflation}
\lim_{N\to\infty}\P[u \textnormal{ exists on } [0,T]\times \T^d\;\;\&\;\; |\Pi_0 u_T| > c\log\log\log N] = 1\;.
\end{equ}
\end{theorem}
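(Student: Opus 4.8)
The plan is to analyze the evolution of $\Pi_0 u$ via a Duhamel/perturbative expansion around the linear solution $\CP_t u_0$, isolating the contribution of the bilinear term $B$ that is responsible for the blow-up of the zeroth mode. Write $u = v + w$, where $v_t = \CP_t u_0$ is the free evolution of the (random, smooth) initial condition and $w$ collects the nonlinear corrections, so that $w$ solves $\partial_t w = \Delta w + B(u,Du) + P(u)$ with $w_0 = 0$. The key observation is that $\Pi_0 u_t = \Pi_0 w_t$ (since $\Pi_0 v_t = \Pi_0 u_0$, which is $O(1)$ and harmless — actually one should track it, but it is negligible compared to the target $\log\log\log N$), and
\begin{equ}
\Pi_0 w_t = \int_0^t \Pi_0\big[B(u_s,Du_s) + P(u_s)\big]\mrd s\;,
\end{equ}
because $\CP_{t-s}$ acts as the identity on the zeroth Fourier mode. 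The dominant term is the \emph{first Picard iterate} of the bilinear part, namely $\int_0^t \Pi_0 B(v_s, Dv_s)\mrd s$; everything else (higher iterates, the polynomial $P$, cross terms involving $w$) must be shown to be of lower order on the time scale $T=(\log N)^{-M}$.

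First I would compute $\Pi_0 B(v_s,Dv_s)$ explicitly in Fourier coordinates. Since $v_s = \sum_k e^{-|k|^2 s} u_{0,k} e_k$ and $\Pi_0$ picks out the terms with matching frequencies $k$ and $-k$, one gets
\begin{equ}
\Pi_0 B(v_s, Dv_s) = \sum_{k\in\Z^d} e^{-2|k|^2 s}\, \mbi k \cdot B\big(u_{0,k},\, u_{0,-k}\big) \quad\text{(schematically)}\;,
\end{equ}
where $k\cdot B(\cdot,\cdot)$ means $\sum_i k_i B_i(\cdot, \cdot)$. Integrating in $s$ over $[0,T]$ produces a factor $\tfrac{1}{2|k|^2}(1 - e^{-2|k|^2 T})\approx \tfrac{1}{2|k|^2}$ for $|k|^2 T \gtrsim 1$. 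Now I use the specific structure of $u_0 = X^N + Y^N$ and Definition~\ref{def:Y}: writing everything in the basis $\{T^a\}$, the antisymmetric part of $B_1$ (which is nonzero by Assumption~\ref{assump:not-deriv}) pairs the component $X^a$ with $Y^b = \CR X^a$. Because $\CR$ multiplies $X^a_k$ by $\mbi\,\mathrm{sgn}(k_1)$, the product $u_{0,k}\otimes u_{0,-k}$ acquires, in the relevant $(a,b)$-sector, a term proportional to $\mbi\,\mathrm{sgn}(k_1)|X^a_k|^2$. Combined with the $\mbi k_1$ from the derivative, the summand becomes real and of a definite sign: $-\,\mathrm{sgn}(k_1) k_1 |X^a_k|^2 = -|k_1|\,|X^a_k|^2 \leq 0$ (times the fixed nonzero vector coming from the antisymmetric part of $B_1$). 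This is the crucial cancellation-of-oscillation mechanism inherited from Lyons' construction: for a \emph{generic} Gaussian field the phases of $u_{0,k}u_{0,-k}$ would oscillate and the sum would not grow, but the correlation $Y^b = \CR X^a$ aligns them. Summing over $k$ gives a contribution of size $\sum_{k_0\leq|k|\leq N} \tfrac{|k_1|}{|k|^2}|X^a_k|^2$, whose expectation is $\gtrsim \sum_{|k|\leq N}|k|^{-1}\sigma^2_N(k) \gtrsim \sum_{|k|\leq N}|k|^{-1}\cdot|k|^{-d+1}(\log|k|\log\log|k|)^{-1}\sim \sum_{|k|\leq N}|k|^{-d}(\log|k|\log\log|k|)^{-1}\sim \log\log\log N$ (the radial integral of $r^{-1}(\log r\log\log r)^{-1}$). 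A concentration estimate (Gaussian chaos / Hanson–Wright, or just Chebyshev since these are sums of independent nonnegative variables) then shows this sum is $\geq c\log\log\log N$ with probability tending to $1$.

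Next I would control the error terms and the issue of \emph{existence} of $u$ on $[0,T]$. By Remark~\ref{rem:Besov_moments}, $|u_0|_{\CC^\eta}$ has moments bounded uniformly in $N$ for any $\eta<-\tfrac12$; fix such an $\eta$. The local well-posedness theory quoted after \eqref{eq:PDE} gives existence on a time interval of length $\gtrsim |u_0|_{\CC^\eta}^{-\kappa}$ for some power $\kappa$, and on that interval $|u_t|_{\CC^\eta}\lesssim |u_0|_{\CC^\eta}$. Choosing $M$ large enough that $(\log N)^{-M}$ is smaller than this random existence time with probability $\to 1$ (using that $|u_0|_{\CC^\eta}$ is at most polylogarithmic — indeed bounded in probability — so its negative power is at least $(\log N)^{-M}$), we get existence on $[0,T]$ with high probability. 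On this interval I then estimate, in $\CC^\eta$ or in a time-weighted norm adapted to the heat flow, the Duhamel remainder $u_s - v_s = w_s$: schematically $|w_s|$ on the relevant scale is $O(T \cdot |u_0|_{\CC^\eta}^2\cdot s^{-\theta})$-type, integrable and $\to 0$ as $T\to 0$, so all terms in $\Pi_0[\cdots]$ other than $\int_0^T\Pi_0 B(v_s,Dv_s)\mrd s$ — namely $\int_0^T \Pi_0 B(v,Dw)$, $\int_0^T\Pi_0 B(w,Dv)$, $\int_0^T\Pi_0 B(w,Dw)$, and $\int_0^T \Pi_0 P(u)$ — are bounded by a power of $|u_0|_{\CC^\eta}$ times a positive power of $T=(\log N)^{-M}$, hence $o(\log\log\log N)$ (in fact $o(1)$) with high probability. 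The symmetric part of $B_1$ and the components $B_i$, $i\geq 2$, as well as the symmetric-in-$(a,b)$ and diagonal parts of the chaos, contribute either zero to $\Pi_0$ (total-derivative terms integrate to zero) or a lower-order mean-zero fluctuation that concentrates away; these I would bound by a second-moment computation showing variance $o((\log\log\log N)^2)$.

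The main obstacle is the second-moment/concentration analysis of the principal term together with the interplay between the random existence time and the deterministic time scale $T=(\log N)^{-M}$: one must verify that the lower bound $\sum \tfrac{|k_1|}{|k|^2}|X^a_k|^2 \gtrsim \log\log\log N$ holds on the \emph{same} high-probability event on which (i) $u$ exists up to $T$, (ii) the Duhamel remainder is small, and (iii) the subleading chaos fluctuations are controlled — and that the $\mbi k_1$-weighted sum genuinely sees the $k_1\neq 0$ modes without the logarithmic deficit in Assumption~\ref{assump:GFS} killing the divergence (this is exactly why the triple-log, rather than a single log, appears, and why the sharper Besov-endpoint logarithmic factors in Assumption~\ref{assump:GFS} are needed). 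Getting the power $M$ explicitly and checking uniformity in $N$ of all the polylogarithmic bounds is the bookkeeping-heavy part; the conceptual heart is the sign-definiteness produced by $Y^b=\CR X^a$.
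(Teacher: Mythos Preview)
Your identification of the blow-up mechanism is correct: the sign-definiteness produced by $Y^b=\CR X^a$ does yield a $\log\log\log N$ divergence in the time-integrated zeroth mode of $B(v,Dv)$, and your computation of the main term matches the paper's (this is $Z_t$ and Lemma~\ref{lem:E_Z}).

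The gap is in your control of existence and of the remainder $w=u-v$. You fix $\eta<-\tfrac12$ so that $|u_0|_{\CC^\eta}$ is bounded uniformly in $N$, and then invoke ``the local well-posedness theory quoted after~\eqref{eq:PDE}'' to obtain an existence time $\gtrsim |u_0|_{\CC^\eta}^{-\kappa}$. But that well-posedness result requires $\eta>-\tfrac12$; below $-\tfrac12$ the equation is precisely \emph{ill}-posed, which is what you are trying to prove. Concretely, the first Picard iterate $\int_0^t \CP_{t-s} B(v_s,Dv_s)\,\mrd s$ cannot be controlled by $|u_0|_{\CC^\eta}^2$ times a positive power of $t$ when $\eta<-\tfrac12$: the crude bound $|B(v_s,Dv_s)|_{L^\infty}\lesssim s^{\eta-1/2}|u_0|_{\CC^\eta}^2$ is not integrable at $s=0$, and there is no deterministic substitute. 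Hence neither your existence claim nor your schematic estimate ``$|w_s|=O(T\cdot|u_0|_{\CC^\eta}^2\cdot s^{-\theta})$'' can be justified from $|u_0|_{\CC^\eta}$ alone, and the cross terms $\int_0^T\Pi_0 B(v,Dw)$ etc.\ are left uncontrolled.

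The paper closes this gap by treating not only the zero mode but the \emph{full} quadratic form $\CN_t=B(\CP_t u_0,D\CP_t u_0)$ probabilistically: after subtracting the divergent constant $H_t=(B_1(T^a,T^b)-B_1(T^b,T^a))\E Z_t$, the remainder $\CQ_t=\CN_t-H_t$ satisfies $\sup_{t}t^\delta|\CQ_t|_{\CC^\beta}<\infty$ uniformly in $N$, for some $\delta\in(1-\tfrac d4,1)$ and $\beta<0$ (Lemmas~\ref{lem:non_zero_modes} and~\ref{lem:Z-EZ}). The exponent $\delta<1$ is a genuine stochastic improvement over what any deterministic bound in $|u_0|_{\CC^\eta}$ could give, and it is precisely what makes the time integral converge. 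The fixed point (Lemma~\ref{lem:deterministic}) is then run for $R_t=u_t-\CP_tu_0-I_t$, with $I_t=\int_0^t H_s\,\mrd s$, taking both $|\CQ|_{\CC^\beta_{\delta;T}}$ and $|u_0|_{\CC^\eta}$ as inputs; existence on $[0,T]$ with $T=(\log N)^{-M}$ and the bound $|R_T|_{\CC^{\hat\beta}}\lesssim 1+|\CQ|_{\CC^\beta_{\delta;T}}$ follow simultaneously. In short, your outline has the right picture for $\Pi_0$, but you need the stochastic control of the \emph{non-zero} modes of $B(v,Dv)$ (Section~\ref{sec:decor}) to make the remainder analysis and the existence argument go through.
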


The proof of Theorem~\ref{thm:main} is given in Section~\ref{sec:proof_thm}.
Before continuing, we note that Theorem~\ref{thm:main} clearly proves Theorem~\ref{thm:main_informal}.
We can now also give the proof of Corollary~\ref{cor:norm_inflation}.

\begin{proof}[Proof of Corollary~\ref{cor:norm_inflation}]
Let $X$ be an $E$-valued GFS with $\E|X^a_k|^2 = \sigma^2(k)$ for every $a\in A$,
and let $Y$ be defined as in Definition~\ref{def:Y}.
It follows from the assumption that $\init$ carries the GFS with variances $\sigma^2(k)$ that the law of $Z\eqdef X+Y$ is a Gaussian measure on the Banach space $\hat\init$
and that
\begin{equ}\label{eq:Z_conv_prob}
\lim_{N\to\infty}\|Z^N-Z\| = 0\quad  \text{in probability.}
\end{equ}
By the assumption that the law of $X^a$ has full support in $\init$, it follows that $Z$ has full support in $\hat\init$.
(One only needs to be careful about the component $T^b$ for which $Z^b = X^b + \CR X^a$ since this is not independent of $Z^a = X^a+Y^a$. However, since $X^b$ is independent of $\{X^c\}_{c\neq b}$ and of $Y$, we indeed obtain that the law of $Z$ has full support in $\hat\init$.)
%all trigonometric functions in the Cameron--Martin space of $Z$.)
%It follows from the support theorem of Gaussian measure (see, e.g.~\cite[Thm.~3.6.1]{Bogachev})
%that $X$ has full support in $\hat\init$.

Now, for every $x\in\hat\init$ and $\delta>0$, one has $\eps\eqdef \P[\|Z-x\| < \delta/2]>0$.
Furthermore,
it follows from~\eqref{eq:Z_conv_prob} that, for all $N$ sufficiently large,
\begin{equ}
\P[\|Z^N-x\| < \delta]>\eps/2\;.
\end{equ}
The conclusion follows from Theorem~\ref{thm:main} (or, more simply, Theorem~\ref{thm:main_informal}).
\end{proof}

\begin{remark}\label{rem:full_support}
In the context of Corollary~\ref{cor:norm_inflation},
a sufficient condition for the law of the real GFS $X$ in~\eqref{eq:GFS_conv} to have full support in $\init$ is that $\sigma^2(k)>0$ for all $k\in\Z^d$ and that the smooth functions are dense in $\init$.
Indeed, the condition $\sigma^2(k)>0$ implies that the Cameron--Martin space of $X$ contains all trigonometric functions, which are dense in the smooth functions.
\end{remark}

We briefly outline the proof of Theorem~\ref{thm:main} and the structure of the rest of the article.
Dropping the reference to $N$,
in Section~\ref{sec:decor} we show that all the quadratic terms in $B(\CP_t(X+Y),D\CP_t(X+Y))$ are controlled uniformly in $N\geq 1$ and $t\in(0,1)$ \textit{except} for the spatial mean
\begin{equ}
J \eqdef \Pi_0 [B_1(\CP_t X^aT^a,\partial_1\CP_tY^bT^b)+B_1(\CP_t Y^bT^b,\partial_1 \CP_t X^aT^a))]\;.
\end{equ}
In Section~\ref{sec:cor} we show that $J - \E J$ is controlled uniformly in $N,t$, together with an upper bound $\E J\leq C N^2\wedge t^{-1}$ uniformly in $N,t$ and a lower bound as $t\to0$ and $N>t^{-1/2}$
\begin{equ}\label{eq:heuristic}
t^{-1}|\log t|^{-1}(\log|\log t|)^{-1}\leq C \E J\;.
\end{equ}
Finally, in Section~\ref{sec:determin},
we show that the solution $u_t$ to~\eqref{eq:PDE}
tracks closely the first Picard iterate $\CP_t u_0 + \int_0^t B(\CP_s u_0,D\CP_s u_0)\mrd s$ at all times $t \leq (\log N)^{-M}$.
Since this first Picard iterate blows up like $\log \log \log N$ at time $t= (\log N)^{-M}$ due to~\eqref{eq:heuristic},
this allows us to conclude the proof of Theorem~\ref{thm:main} in Section~\ref{sec:proof_thm}.
In Appendix~\ref{app:GFS}, we give conditions under which the Besov space $B^{\alpha}_{\infty,q}$ for $\alpha\in\R$ and $q\in [1,\infty]$
carries the GFS with a specified covariance $\sigma^2$.

\section{Decorrelated terms}\label{sec:decor}

In the remainder of the article, we will use $x\lesssim y$ to denote that $x\leq Cy$ for some fixed proportionality constant $C>0$,
and likewise let
$x\asymp y$ to denote $C^{-1}x \leq y \leq Cx$.
We let $x \gg 1$ and $x\ll 1$ denote $x>C$ and $0<x<C^{-1}$ respectively for some sufficiently large fixed $C>0$.

In this section, we let $X$ and $Y$ be real GFSs such that $\E X_k Y_n = 0$ for all $k\neq -n$ and such that there exists $C>0$ such that, for all $k\in\Z^d$
\begin{equ}
\E|X_k|^2 + \E|Y_k|^2 \leq  C (|k|+1)^{-d+1}\;
\end{equ}
We will apply the results of this section to
the three cases $X=Y$, $\CR X=Y$, or $X$ and $Y$ are independent.
We record the following lemma, the proof of which is straightforward and we omit.

\begin{lemma}\label{lem:correlations}
Consider $m,n,\bar m,\bar n\in \Z^d$ with $m+n\neq 0$. Then
$\E X_mY_n X_{\bar m}Y_{\bar n} = 0$
unless $(-m,-n)=(\bar m,\bar n)$ or $(-m,-n)=(\bar n,\bar m)$.
\end{lemma}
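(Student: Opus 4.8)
The plan is to compute the fourth mixed moment directly using Wick's (Isserlis') formula. In each of the three situations to which this section is applied, the pair $(X,Y)$ is a centered jointly Gaussian family (trivially when $X=Y$ or when the Fourier modes of $Y$ are deterministic multiples of those of $X$, and by independence otherwise), so $X_m,Y_n,X_{\bar m},Y_{\bar n}$ are complex-valued random variables obtained as $\C$-linear functionals of an underlying real centered Gaussian vector. Since Wick's formula is multilinear it extends from real to complex linear combinations, and therefore
\begin{equ}
\E X_mY_nX_{\bar m}Y_{\bar n} = \E[X_mY_n]\,\E[X_{\bar m}Y_{\bar n}] + \E[X_mX_{\bar m}]\,\E[Y_nY_{\bar n}] + \E[X_mY_{\bar n}]\,\E[X_{\bar m}Y_n]\;.
\end{equ}
It then suffices to determine, for each of the three summands, the constraints on $(m,n,\bar m,\bar n)$ under which it can be non-zero.

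For the first summand, the cross-covariance hypothesis $\E X_kY_\ell = 0$ whenever $k\neq -\ell$ forces $m=-n$ for $\E[X_mY_n]$ to be non-zero; since $m+n\neq 0$ by assumption, this summand vanishes identically. For the second summand I use the second-moment structure of a complex GFS: by independence $\E X_kX_\ell=0$ unless $\ell\in\{-k,k\}$, and by the defining property $\E X_k^2=0$ for $k\neq 0$ the diagonal case $\ell=k\neq 0$ also contributes nothing, so $\E X_kX_\ell\neq 0$ implies $\ell=-k$ (the remaining possibility $k=\ell=0$ being subsumed). Hence $\E[X_mX_{\bar m}]\neq 0$ forces $\bar m=-m$ and $\E[Y_nY_{\bar n}]\neq 0$ forces $\bar n=-n$, so this summand can be non-zero only when $(\bar m,\bar n)=(-m,-n)$, i.e.\ $(-m,-n)=(\bar m,\bar n)$. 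The third summand is handled symmetrically: $\E[X_mY_{\bar n}]\neq 0$ forces $\bar n=-m$ and $\E[X_{\bar m}Y_n]\neq 0$ forces $\bar m=-n$, so it can be non-zero only when $(\bar m,\bar n)=(-n,-m)$, i.e.\ $(-m,-n)=(\bar n,\bar m)$. Combining the three cases yields the claim.

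There is essentially no genuine obstacle here: the argument is bookkeeping once Wick's formula is invoked. The only point deserving a line of care is that in the second summand one must use the stronger relation $\E X_k^2=0$ for $k\neq 0$ (part of the definition of a complex GFS), not merely pairwise independence, in order to discard the diagonal contribution $\bar m=m$ — and symmetrically $\bar n=n$ — and thereby obtain the clean dichotomy $\bar m=-m$. The implicit assumption that $(X,Y)$ is jointly Gaussian should also be noted explicitly, as it is what legitimises the use of Wick's formula and holds in all three cases of interest.
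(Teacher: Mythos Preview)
Your proof is correct; the paper itself omits the proof of this lemma, calling it ``straightforward,'' and Wick's formula is exactly the natural tool. Your explicit remark that joint Gaussianity of $(X,Y)$ is needed (and holds in the three cases $X=Y$, $Y=\CR X$, $X\perp Y$ to which the section is applied) is a useful clarification that the paper leaves implicit.
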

Let $\pi_0 \xi = \xi - \Pi_0 \xi$ denote the projection onto the zero-mean distributions.
\begin{lemma}\label{lem:non_zero_modes}
Consider $1\leq i\leq d$, $\delta\in (0,1]$ and $\beta <0$ such that $\beta+2(1-\delta) < 0$ and $\delta>1-\frac d4$.
Then every moment of $\sup_{t\in (0,1)} t^{\delta}|\pi_0(\CP_t X \partial_i \CP_t Y)|_{\CC^\beta}$ is finite and depends only on $C,\delta,\beta$.
Furthermore, if $X$ and $Y$ are independent, then the same holds for $\sup_{t\in (0,1)} t^{\delta}|(\CP_t X \partial_i \CP_t Y)|_{\CC^\beta}$.
\end{lemma}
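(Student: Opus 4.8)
The plan is to treat $g_t\eqdef\pi_0(\CP_t X\,\partial_i\CP_t Y)$ (and, in the independent case, the full product $\CP_t X\,\partial_i\CP_t Y$) as a random field lying in the second homogeneous Gaussian chaos, and to run a Kolmogorov-type argument controlling its $\CC^\beta$-norm through pointwise second moments. The structural remark that makes this work is that for $k\neq 0$ the Fourier coefficient $(g_t)_k=\sum_{m+n=k}\mbi n_i e^{-(|m|^2+|n|^2)t}X_mY_n$ is centred, because $\E[X_mY_n]=0$ whenever $m+n=k\neq 0$; hence $g_t$ belongs to the second chaos and all of its $L^p(\Omega)$-moments are dominated by its variance via Gaussian hypercontractivity. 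In the independent case $\E[X_mY_n]=0$ for \emph{all} $m,n$, so the zero spatial mode is centred as well and $\pi_0$ becomes unnecessary; this is the only way the two cases differ (and below the zero mode will turn out to have variance of the same order as the other low modes, which are present in either case).

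I would first bound the $\CC^\beta$-norm blockwise. By Bernstein's inequality $|\Delta_\ell g_t|_{L^\infty}\lesssim 2^{\ell d/p}|\Delta_\ell g_t|_{L^p}$, and by hypercontractivity together with stationarity of $g_t$ one has $\E|\Delta_\ell g_t|_{L^p}^p\lesssim_p\big(\E|\Delta_\ell g_t(0)|^2\big)^{p/2}$, so that
\begin{equ}
\E\,|g_t|_{\CC^\beta}^p\;\le\;\sum_{\ell\geq-1}\E\big[2^{\beta\ell p}|\Delta_\ell g_t|_{L^\infty}^p\big]\;\lesssim_p\;\sum_{\ell\geq-1}2^{(\beta+d/p)\ell p}\,\big(\E|\Delta_\ell g_t(0)|^2\big)^{p/2}\;.
\end{equ}
For $t$ bounded away from $0$ the right-hand side is trivially bounded, so the point is the behaviour as $t\to0$; the supremum over $t\in(0,1)$ is then handled by the usual dyadic decomposition $t\in[2^{-j-1},2^{-j}]$ and a chaining estimate, using that $\partial_t g_t=\Delta\CP_t X\,\partial_i\CP_t Y+\CP_t X\,\partial_i\Delta\CP_t Y$ obeys the same kind of bound with two extra powers of frequency — harmless after multiplication by the interval length $2^{-j}$, since all sums below are dominated by the dyadic scale $2^\ell\sim 2^{j/2}=t^{-1/2}$, and the strictness of the two hypotheses leaves room for the resulting losses.

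It remains to estimate $\E|\Delta_\ell g_t(0)|^2$. Expanding by Wick's theorem and invoking Lemma~\ref{lem:correlations}, only the pairings $(\bar m,\bar n)=(-m,-n)$ and $(\bar m,\bar n)=(-n,-m)$ survive; bounding each covariance by $C(|\cdot|+1)^{-d+1}$ and using the $m\leftrightarrow n$ symmetry of the resulting frequency sums (which in the correlated cases $X=Y$ and $Y=\CR X$ also absorbs the less favourable weight $|m_in_i|$ coming from the second pairing), one finds that $\E|\Delta_\ell g_t(0)|^2$ is at most a constant multiple of
\begin{equ}
Q_\ell(t)\;\eqdef\;\sum_{\substack{m,n\in\Z^d\\ |m+n|\sim 2^\ell}}|n|^2\,e^{-2(|m|^2+|n|^2)t}\,(|m|+1)^{-d+1}(|n|+1)^{-d+1}\;.
\end{equ}
Using the identity $|m|^2+|n|^2=\tfrac12|m+n|^2+\tfrac12|m-n|^2$ to extract a factor $e^{-c2^{2\ell}t}$, and estimating the remaining convolution-type sum by comparison with integrals (distinguishing whether $|m|$ or $|n|$ dominates and whether the dominant frequency lies below or above $t^{-1/2}$), one gets $Q_\ell(t)\lesssim 2^{\ell d}\,t^{-(4-d)/2}\,e^{-c2^{2\ell}t}$. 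Feeding this into the displayed bound, the dyadic sum is governed either by its peak at $2^{2\ell}\sim t^{-1}$, which yields $t^{-(1+\beta/2)p-O(1)}$, or by its bottom term $\ell=-1$, which yields $t^{-(4-d)p/4}$; after multiplying by $t^{\delta p}$ both remain bounded as $t\to0$ precisely because $\delta>1+\beta/2$ (i.e.\ $\beta+2(1-\delta)<0$) and $\delta>1-\tfrac d4$. Summing over $j$ then gives $\E\big[\sup_{t\in(0,1)}t^{\delta p}|g_t|_{\CC^\beta}^p\big]<\infty$ for every $p\geq 1$, with a constant depending only on $C$, $\delta$, $\beta$ (and on $p$ and the fixed $d$); the independent statement follows in the same way once $\pi_0$ is dropped.

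The main obstacle is the estimate on $Q_\ell(t)$ and on the analogous sum coming from the $(\bar m,\bar n)=(-n,-m)$ pairing: one must squeeze enough decay out of the constraint $|m+n|\sim 2^\ell$, the two polynomial factors, and the heat kernel so that, after the Littlewood--Paley summation and the Bernstein loss $2^{\ell d/p}$, no power of $t$ worse than $t^{-(1+\beta/2)}\vee t^{-(4-d)/4}$ survives — this is exactly where the thresholds $1+\beta/2$ and $1-\tfrac d4$ enter. Everything else — the reduction to the second chaos, hypercontractivity, Bernstein's inequality, and the chaining in $t$ — is routine.
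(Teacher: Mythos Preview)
Your approach is correct and essentially the same as the paper's: both reduce to second-moment bounds via hypercontractivity and Lemma~\ref{lem:correlations}, then control the supremum in $t$ by a Kolmogorov-type argument. The paper's packaging is slightly more direct---it works with time increments of $Z_t=t^\delta g_t$ from the outset (using $\psi_{t,u}(x)^2\lesssim |t-u|^\kappa x^{-2\delta+2\kappa}$ to absorb both the $t^\delta$ weight and the heat exponential into a single polynomial frequency bound) and passes through $B^\beta_{2p,2p}$ plus Besov embedding rather than Bernstein blockwise---but your fixed-time estimate $Q_\ell(t)\lesssim 2^{\ell d}t^{-(4-d)/2}e^{-c2^{2\ell}t}$ followed by dyadic chaining in $t$ is an equivalent reorganisation of the same computation.
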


\begin{proof}
Let us denote $Z_t = t^\delta\CP_t X \partial_i \CP_t Y$ for $t\in (0,1]$ and
$Z_0=0$.
We will show that there exists $\kappa>0$ such that for all $p\geq 1$
\begin{equ}\label{eq:moment_bound}
\E|\pi_0(Z_u-Z_t)|^p_{\CC^\beta} \lesssim |u-t|^{\kappa p}
\end{equ}
uniformly over $0\leq t < u \leq 1$,
from which the first statement follows by Kolmogorov's continuity theorem~\cite[Thm.~A.10]{FV10}.
The second statement will be shown in a similar way.
It suffices to consider $t<u\leq 2t$.

For every $p\geq 1$ and $\ell\geq 0$,
\begin{equ}\label{eq:E1}
\E |\Delta_\ell \pi_0(Z_u-Z_t)|_{L^{2p}}^{2p}
= \int_{\T^d} \E |(\Delta_\ell \pi_0(Z_u-Z_t))(x)|^{2p} \mrd x\;.
\end{equ}
By equivalence of Gaussian moments,
the final integral is bounded above by a constant $C(p)>0$
times
\begin{equ}\label{eq:E2}
\int_{\T^d} (\E |(\Delta_\ell \pi_0(Z_u-Z_t))(x)|^2)^p \mrd x = \int_{\T^d} \Big|\E \Big(\sum_{k\neq 0} \chi_\ell(k)\F^k_{t,u} e_k(x)\Big)^2\Big|^p \mrd x\;,
\end{equ}
where $\F^k_{t,u}\eqdef\scal{Z_u-Z_t,e_k}$ is the $k$-th Fourier coefficient of $Z_u-Z_t$
\begin{equ}
\F^k_{t,u} =
\sum_{m+n=k} \psi_{t,u}(m^2+n^2) \mbi n_i
X_m Y_n\;,
\end{equ}
where
\begin{equ}
\psi_{t,u}(x) = u^\delta e^{- x u} - t^\delta e^{- x t}\;.
\end{equ}
For $0\neq k\neq -\bar k$, observe that $\E \F^k_{t,u} \F^{\bar k}_{t,u} = 0$
due to Lemma~\ref{lem:correlations}.
Hence
\begin{equ}
\E \Big(\sum_{k\neq 0} \chi_\ell(k)\F^k_{t,u} e_k(x)\Big)^2 = \sum_{k\neq 0}\chi_\ell(k)^2 \E |\F^k_{t,u}|^2\;.
\end{equ}
Again by Lemma~\ref{lem:correlations}, for $k\neq 0$,
\begin{equs}
\E |\F^k_{t,u}|^2 &= \sum_{m+n=k} \psi_{t,u}(m^2+n^2)^2
(n_i^2\E X_m Y_n X_{-m}Y_{-n} + n_im_i\E X_mY_nX_{-n}Y_{-m})
\\
&\lesssim \sum_{m+n=k} \psi_{t,u}(m^2+n^2)^2 
n_i^2 (|m|+1)^{-d+1}(|n|+1)^{-d+1}\;.\label{eq:sum_F}
\end{equs}
Since $0<\delta\leq 1$, observe that, for $\kappa>0$ small, uniformly in $x\geq 1$ and $0<t<u\leq2t<1$,
\begin{equ}\label{eq:psi_bound}
\psi_{t,u}(x)^2 \lesssim
t^{2\delta} e^{-2x t}|t-u|^\kappa x^\kappa + e^{-2xu}|t-u|^{2\delta}
\lesssim
e^{-2xt}t^{2\delta-\kappa}|t-u|^\kappa x^\kappa
\lesssim |t-u|^\kappa x^{-2\delta+2\kappa}\;,
\end{equ}
where we used
%that $t< u\leq 2t$ in the second bound and 
that $e^{-x}x^p\lesssim 1$ for any $p>0$ in the final bound.

We split the sum~\eqref{eq:sum_F} into two parts with $|m| \leq 2|k|$ and $|m|>2|k|$.
The first part is bounded above by a multiple of
\begin{equ}
%(t^{2\delta})e^{-c k^2 t}|k|^{2\kappa} + e^{-ck^2u}|t-u|^\delta)
|t-u|^\kappa k^{-4\delta+4\kappa}
\sum_{1 \leq |m|\leq 2|k|}
m^{-d+1}|k-m|^{-d+3}
\lesssim
|t-u|^\kappa k^{-d+4+4\kappa-4\delta}\;,
\end{equ}
where we used~\eqref{eq:psi_bound} and
\begin{equ}
\sum_{1\leq |m|\leq|k|/2} m^{-d+1}|k-m|^{-d+3}
\asymp 
\sum_{|k|/2\leq |m|\leq2|k|} m^{-d+1}|k-m|^{-d+3}
\asymp k^{-d+4}\;.
\end{equ}
The second part is bounded by a multiple of
\begin{equ}
|t-u|^\kappa
\sum_{|m|> 2|k|}
m^{-4\delta+4\kappa} m^{-2d+4}
%= \int_{2|k|}^\infty e^{- r^2t} r^{-2s+2} r^{d-1} \mrd r
%&= |t-u|^\kappa
%\int_{2|k|}^\infty r^{-4\delta+4\kappa-2s+2} r^{d-1}\mrd r
%\\
%&= |t-u|^\kappa
%\int_{|k|}^\infty r^{-4\delta+4\kappa+3-d}\mrd r
%\\
\asymp |t-u|^\kappa k^{-4\delta+4\kappa+4-d}\;,
\end{equ}
where we again used~\eqref{eq:psi_bound} and the fact that $-4\delta+4\kappa-2d+4<-d$ for $\kappa>0$ small since $\delta>1-\frac d4$ by assumption.
In conclusion, $\E |\F^k_{t,u}|^2 \lesssim |t-u|^\kappa k^{-4\delta+4\kappa+4-d}$,
and thus
\begin{equ}\label{eq:E3}
\E \Big(\sum_{k\neq 0} \chi_\ell(k)\F^k_{t,u} e_k(x)\Big)^2 \lesssim
%2^{\ell d} e^{-c2^{2\ell}t}(2^{-d\ell+4\ell} + t^{\frac d2-2})=
|t-u|^\kappa 2^{\ell (-4\delta +4\kappa+4)}\;.
\end{equ}
Consequently, by~\eqref{eq:E1},~\eqref{eq:E2}, and~\eqref{eq:E3},
\begin{equs}
\E |\pi_0(Z_u-Z_t)|_{B^{\beta}_{2p,2p}}^{2p} &= \sum_{\ell\geq -1}2^{2p\ell\beta} \E|\Delta_\ell \pi_0(Z_u-Z_t) |_{L^{2p}}^{2p}
\lesssim
\sum_{\ell\geq -1}
|t-u|^{\kappa p}
2^{2p\ell\beta+\ell(4-4\delta +4\kappa)p}\;.
\end{equs}
Since $2\beta+4-4\delta<0$, we can take $\kappa>0$ sufficiently small such that the series is summable in $\ell$.
Recall the Besov embedding $B^{\alpha}_{p_1,q_1} \hookrightarrow B^{\alpha-d(1/p_1-1/p_2)}_{p_2,q_2}$
for $1\leq p_1\leq p_2\leq \infty$, $1\leq q_1\leq q_2\leq \infty$, and $\alpha\in \R$ (see, e.g.,~\cite[Lem.~A.2]{GIP15}).
Taking $p_2=q_2=\infty$, $\alpha=\beta$, and $p_1=q_1=2p$,
we obtain~\eqref{eq:moment_bound} (with $\beta$ replaced by $\beta-d/(2p)$ but where we can take $p$ arbitrarily large),
which completes the proof of the first statement.

For the second statement, since $\E\F^0_{t,u}\F^k_{t,u}=0$ for $k\neq 0$, it suffices to show
$\E|\F^0_{t,u}|^2 \lesssim |t-u|^\kappa$ uniformly in $0<t<u\leq 2t<1$.
If $X$ and $Y$ are independent, then clearly
$\E X_mY_n X_{\bar m}Y_{\bar n} = \E X_mX_{\bar m}\E Y_nY_{\bar n}$, and thus, for $\kappa>0$ small,
\begin{equ}
\E |\F^0_{t,u}|^2 = \sum_{m\in\Z^d} \psi_{t,u}(2m^2)^2
(m_i^2\E |X_m|^2 \E |Y_m|^2)
\lesssim \sum_{m\neq 0} \psi_{t,u}(2m^2)^2 
m_i^2 m^{-2d+2} \lesssim |t-u|^\kappa\;,
\end{equ}
where the final bound follows from~\eqref{eq:psi_bound} and the fact that $-4\delta-2d+4<-d$.
\end{proof}

\section{Correlated terms}\label{sec:cor}

We now take  $\{X^N\}_{N \geq 1}$ satisfying Assumption~\ref{assump:GFS}.
We will drop $N$ from our notation, writing simply $X$ for $X^N$.
Recall that Assumption~\ref{assump:not-deriv} is in place.
We define $Z\colon [0,1]\to \R$ by
\begin{equ}
Z_t = \sum_{n_1>0} 2 e^{-2n^2t} n_1|X^a_n|^2\;.
\end{equ}
The importance of $Z_t$ is that, if $\xi\in C^\infty(\T^d,\R)$ and $1\leq i\leq d$,
then
\begin{equs}
\Pi_0 (\xi \partial_i\CR \xi) &=
\sum_n \mbi n_i\xi_{-n}(\CR\xi)_{n}
=
\sum_{n_1>0} 
(-n_i)
|\xi_n|^2
+ \sum_{n_1<0}
n_i
|\xi_n|^2
+ \sum_{n_1=0}
\mbi n_i
|\xi_n|^2
\\
&=
\begin{cases}
-\sum_{n_1>0} 2 n_1|\xi_n|^2 &\quad\textnormal{ if } i=1\;,\\
0 &\quad\textnormal{ if } i\neq 1\;.
\end{cases}
\end{equs}
(If $i\neq 1$, the 1st and 2nd sums cancels and the 3rd is $0$, and if $i=1$, the 3rd sum is still $0$ but the 1st and 2nd sums are equal.)
Similarly
\begin{equs}
\Pi_0 (\CR\xi \partial_i \xi) &=
\sum_n \mbi n_i (\CR\xi)_{-n} \xi_{n}
=
\sum_{n_1>0} 
n_i
|\xi_n|^2
+ \sum_{n_1<0}
(-n_i)
|\xi_n|^2
+ \sum_{n_1=0}
\mbi n_i
|\xi_n|^2
\\
&=
\begin{cases}
\sum_{n_1>0} 2 n_1|\xi_n|^2 &\quad\textnormal{ if } i=1\;,\\
0 &\quad\textnormal{ if } i\neq 1\;.
\end{cases}
\end{equs}
Therefore
\begin{equ}
\Pi_0(\CP_t \CR X^a \partial_i \CP_t X^a)
=
- \Pi_0(\CP_t X^a \partial_i \CP_t \CR X^a)
=
\begin{cases}
Z_t &\quad\textnormal{ if } i=1\;,\\
0 &\quad\textnormal{ if } i\neq 1\;.
\end{cases}
\end{equ}
Hence, for $X,Y$ related as in Definition~\ref{def:Y}, by Lemma~\ref{lem:non_zero_modes}, the quantity which we expect to be well-behaved uniformly in $N$ is
\begin{equ}
B(\CP_t (X+Y), D\CP_t(X+Y)) + (B_1(T^a,T^b)-B_1(T^b,T^a))Z_t\;.
\end{equ}
The following lemma shows that $Z_t$ is well approximated by its expectation.

\begin{lemma}\label{lem:Z-EZ}
Let $\delta> 1 -\frac d4$.
Then
$\sup_{t\in (0,1)} t^{\delta}( Z_t-\E Z_t)$ has moments of all orders bounded uniformly in $N$.
\end{lemma}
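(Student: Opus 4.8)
The statement concerns $Z_t = \sum_{n_1>0} 2 e^{-2n^2 t} n_1 |X^a_n|^2$, which is a sum of independent (weighted) chi-squared random variables once we fix the sign convention $n_1>0$ (the $X^a_n$ for $n_1>0$ are independent complex Gaussians). The plan is to run the same Kolmogorov-continuity argument as in Lemma~\ref{lem:non_zero_modes}, but now for the scalar process $t\mapsto t^\delta(Z_t - \E Z_t)$ rather than a Besov-valued one, which makes the estimates considerably simpler. Write $W_t = t^\delta Z_t$ for $t\in(0,1]$, $W_0 = 0$, and aim to show that for every $p\geq 1$ there is $\kappa>0$ with
\begin{equ}
\E |(W_u - \E W_u) - (W_t - \E W_t)|^p \lesssim |u-t|^{\kappa p}
\end{equ}
uniformly in $0\le t<u\le 1$, with the implied constant depending only on $C,\delta,\beta$ and $p$ but \emph{not} on $N$; Kolmogorov's continuity theorem then gives the claim. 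As before it suffices to treat $t<u\le 2t$.

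First I would observe that, writing $a_n(t) = t^\delta\, 2 e^{-2n^2 t} n_1$, the centred increment is $\sum_{n_1>0}(a_n(u)-a_n(t))(|X^a_n|^2 - \E|X^a_n|^2)$, a sum of independent centred random variables. By equivalence of moments for (polynomials in) Gaussians — or directly by hypercontractivity / Rosenthal's inequality for sums of independent mean-zero variables with finite moments of all orders — the $p$-th moment is bounded by a constant times
\begin{equ}
\Big(\sum_{n_1>0}(a_n(u)-a_n(t))^2 \,\E|X^a_n|^2\,\E|X^a_n|^2\Big)^{p/2}
+ \sum_{n_1>0}|a_n(u)-a_n(t)|^p\,\big(\E|X^a_n|^2\big)^p
\end{equ}
(plus lower-order terms controlled the same way). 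Using Assumption~\ref{assump:GFS}, $\E|X^a_n|^2 \lesssim |n|^{-d+1}$, and the pointwise increment bound: with $\psi_{t,u}(x) = u^\delta e^{-xu} - t^\delta e^{-xt}$ exactly as in~\eqref{eq:psi_bound}, one has $a_n(u)-a_n(t) = 2 n_1 \psi_{t,u}(2n^2)$, so $|a_n(u)-a_n(t)| \lesssim |n|\,|t-u|^{\kappa/2}(n^2)^{-\delta+\kappa}$. Hence the quadratic (Gaussian-chaos) term is controlled by
\begin{equ}
|t-u|^{\kappa p}\Big(\sum_{n\neq 0} n^2 \,n^{-4\delta+4\kappa}\, n^{-2d+2}\Big)^{p/2}
= |t-u|^{\kappa p}\Big(\sum_{n\neq 0} n^{-4\delta+4\kappa+4-2d}\Big)^{p/2},
\end{equ}
and the exponent $-4\delta+4\kappa+4-2d < -d$ for $\kappa$ small precisely because $\delta > 1-\frac d4$; so the sum converges uniformly in $N$. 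The higher Rosenthal term is even easier since it carries more decay. This yields the desired increment bound and finishes the proof.

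The only point requiring a little care — and the mildest of "obstacles" — is that $Z_t$ is quadratic rather than linear in the Gaussians, so one cannot quote equivalence of Gaussian moments verbatim as in Lemma~\ref{lem:non_zero_modes}; instead one invokes that second-chaos variables (or sums of independent sub-exponential centred variables) have all moments comparable to the $L^2$ norm, which is standard and self-contained. Everything else is a routine repetition of the computation already carried out in the proof of Lemma~\ref{lem:non_zero_modes}, with the Besov bookkeeping over dyadic blocks stripped away. No lower bound on $\E|X^a_n|^2$ is used here — only the upper bound from Assumption~\ref{assump:GFS} — so the constants are uniform in $N$ as claimed.
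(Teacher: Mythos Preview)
Your proposal is correct and follows essentially the same route as the paper: a Kolmogorov continuity argument for $t\mapsto t^\delta(Z_t-\E Z_t)$, reducing via moment equivalence in the second Wiener chaos to the variance computation, then controlling $\sum_{n_1>0}\psi_{t,u}(n^2)^2 n_1^2\,\sigma^4(n)$ by the same $\psi$-bound~\eqref{eq:psi_bound} and the summability condition $-4\delta+4\kappa+4-2d<-d$ coming from $\delta>1-\tfrac d4$. Your remark that ``equivalence of Gaussian moments'' should here be read as hypercontractivity for quadratic functionals (or Rosenthal for independent sub-exponentials) is well taken---the paper uses the phrase loosely in exactly this sense---and the extra Rosenthal term you write down is indeed harmless.
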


\begin{proof}
Define $W_t =t^{\delta} ( Z_t-\E Z_t)$.
We will show that there exists $\kappa>0$ such that, for all $p\geq 1$,
\begin{equ}
\E |W_u-W_t|^p \lesssim |t-u|^{\kappa p}\;,
\end{equ}
uniformly in $0\leq t < u \leq 1$ and $N\geq 1$,
from which the conclusion follows by Kolmogorov's continuity theorem~\cite[Thm.~A.10]{FV10}.
It suffices to consider $t<u\leq 2t$ and, by equivalence of Gaussian moments, $p=2$.

Since $X^a_n$ and $X^a_m$ are independent if $n\neq m$ with $n_1,m_1>0$, and $\E |X^a_n|^4 - (\E |X_n^a|^2)^2 \leq \sigma^4(n) \lesssim n^{-2d+2}$,
we obtain 
\begin{equ}
\E(W_u - W_t)^2 \asymp \sum_{n_1>0} \psi_{t,u}(n^2)^2 n_1^2 ( \E |X^a_n|^4 - (\E |X_n^a|^2)^2)
\lesssim \sum_{n_1>0} \psi_{t,u}(n^2)^2  n^{-2d+4}\;,
\end{equ}
where 
\begin{equ}
\psi_{t,u}(x) = t^{\delta}e^{-2xt}-u^{\delta}e^{-2xu}\;.
\end{equ}
Recall from~\eqref{eq:psi_bound} that, for $\kappa>0$ small, uniformly in $0<t<u\leq 2t<1$ and $x\geq 1$,
\begin{equ}
\psi_{t,u}(x)^2 \lesssim |t-u|^\kappa x^{-2\delta+2\kappa}\;.
\end{equ}
We can take $\kappa>0$ sufficiently small such that $-2d+4-4\delta+4\kappa < -d$,
and therefore
\begin{equation*}
\E(W_u - W_t)^2
\lesssim |t-u|^\kappa \sum_{n_1>0} n^{-2d+4-4\delta+4\kappa}
\asymp |t-u|^\kappa\;.\qedhere
\end{equation*}
\end{proof}
We now show that $\E Z_t$ explodes as $N\to\infty$ and $t\to 0$ in a controlled manner.
\begin{lemma}\label{lem:E_Z}
It holds that
\begin{equ}\label{eq:_Z_upper}
\E Z_t \lesssim N^2 \wedge t^{-1}
\end{equ}
uniformly in $t\in (0,1)$ and in $N\geq 1$.
Furthermore,
\begin{equ}\label{eq:_Z_lower}
\E Z_t \gtrsim t^{-1}|\log t|^{-1} (\log |\log t| )^{-1}
\end{equ}
uniformly in $0<t\ll 1$ and $N>t^{-1/2}$.
\end{lemma}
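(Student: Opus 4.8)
The quantity to analyse is $\E Z_t = \sum_{n_1 > 0} 2 e^{-2n^2 t} n_1 \sigma^2(n)$, where $\sigma^2(n) = \sigma^2_N(n)$ is supported on $|n| \le N$ and satisfies the two-sided bound in Assumption~\ref{assump:GFS}. The plan is to compare this sum to an integral over $\{x \in \R^d : x_1 > 0\}$ of the form $\int e^{-2|x|^2 t} x_1 |x|^{-d+1}\,\mrd x$ (times logarithmic corrections), truncated at $|x| \le N$, which after passing to polar coordinates reduces to a one-dimensional integral in the radial variable $r = |x|$. The angular integral of $x_1$ over the half-sphere is a positive constant, and the radial integral becomes (up to constants) $\int_0^N e^{-2r^2 t} r \,\mrd r$ for the upper bound and the same with a factor $(\log r)^{-1}(\log\log r)^{-1}$ inserted for the lower bound.

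\textbf{Upper bound.} For $\E Z_t \lesssim N^2$: since $n_1 \le |n| \le N$ and $\sigma^2(n) \lesssim |n|^{-d+1}$, we get $\E Z_t \lesssim \sum_{0 < |n| \le N} |n|^{-d+2} \lesssim \sum_{j=1}^{N} j^{-d+2} j^{d-1} = \sum_{j=1}^N j \lesssim N^2$, using that the number of lattice points with $|n| \asymp j$ is $\asymp j^{d-1}$. For $\E Z_t \lesssim t^{-1}$: now keep the Gaussian factor and bound $\E Z_t \lesssim \sum_{n \neq 0} e^{-2|n|^2 t}|n|^{-d+2} \lesssim \sum_{j \ge 1} e^{-2j^2 t} j^{-d+2} j^{d-1} = \sum_{j\ge 1} j e^{-2j^2 t} \lesssim \int_0^\infty r e^{-2r^2 t}\,\mrd r \asymp t^{-1}$ (the sum is comparable to the integral up to an $O(1)$ additive term from the $j=1$ region, which is $\lesssim 1 \lesssim t^{-1}$ for $t < 1$). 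Taking the minimum of the two bounds gives~\eqref{eq:_Z_upper}.

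\textbf{Lower bound.} Here I use the lower bound $\sigma^2(n) \gtrsim |n|^{-d+1}(\log|n|)^{-1}(\log\log|n|)^{-1}$ valid for $k_0 \le |n| \le N$. Restricting the sum to this range and to a fixed positive-density cone of directions where $n_1 \asymp |n|$, I get $\E Z_t \gtrsim \sum_{\substack{k_0 \le |n| \le N \\ n_1 \asymp |n|}} e^{-2|n|^2 t} |n| \cdot |n|^{-d+1}(\log|n|)^{-1}(\log\log|n|)^{-1} \gtrsim \sum_{j=k_0}^{N} e^{-2j^2 t} j \,(\log j)^{-1}(\log\log j)^{-1}$, again using the lattice-point count $\asymp j^{d-1}$ in the cone. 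Now I compare this decreasing-times-Gaussian sum to $\int_{k_0}^{N} e^{-2r^2 t} r\,(\log r)^{-1}(\log\log r)^{-1}\,\mrd r$. Since $t \ll 1$ and $N > t^{-1/2}$, the effective cutoff from the Gaussian is at scale $r \asymp t^{-1/2} \in (k_0, N)$, so the integral is bounded below by the contribution from $r \in [t^{-1/2}, 2t^{-1/2}]$, on which the logarithmic factors are $\asymp (\log t^{-1/2})^{-1}(\log\log t^{-1/2})^{-1} \asymp |\log t|^{-1}(\log|\log t|)^{-1}$ and $\int_{t^{-1/2}}^{2t^{-1/2}} e^{-2r^2 t} r\,\mrd r \asymp t^{-1}$. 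This yields~\eqref{eq:_Z_lower}.

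\textbf{Main obstacle.} The only genuinely delicate point is justifying the sum-to-integral comparison in the presence of the slowly varying logarithmic weights while keeping track of where the $k_0$ cutoff and the condition $N > t^{-1/2}$ enter; one must check that the Gaussian concentrates the mass at scale $t^{-1/2}$, which lies strictly between $k_0$ and $N$ under the stated hypotheses, so that neither the low-frequency truncation at $k_0$ (where only the weaker bound $\sigma^2(n) \le C$ is available) nor the high-frequency truncation at $N$ destroys the dominant contribution. Everything else is a routine lattice-point-counting and polar-coordinates computation.
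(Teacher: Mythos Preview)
Your proposal is correct and follows essentially the same route as the paper: reduce $\E Z_t$ via lattice-point counting to a one-dimensional radial sum/integral, bound the upper part by $\int r e^{-cr^2 t}\,\mrd r$ (truncated at $N$) to get $N^2\wedge t^{-1}$, and bound the lower part by the same integral with the extra $(\log r)^{-1}(\log\log r)^{-1}$ weight, localised near $r\asymp t^{-1/2}$. The only cosmetic differences are that the paper handles both upper regimes at once via the substitution $u=rt^{1/2}$ rather than splitting cases, and it absorbs the $n_1$ factor directly into the shell count rather than first restricting to a cone $n_1\asymp|n|$ as you do; both are equivalent.
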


\begin{proof}
Using the upper bound in Assumption~\ref{assump:GFS},
$\E Z_t$ is bounded above by a multiple of
\begin{equ}
\sum_{1\leq |n|\leq N} e^{-n^2 t} n^{-d+2}
\lesssim
\int_1^{2N}  e^{-r^2 t} r\mrd r
=
\int_{t^{1/2}}^{2N t^{1/2}}  e^{- u^2} u t^{-1/2} t^{-1/2}\mrd u
\asymp t^{-1} \wedge N^2\;.
\end{equ}
On the other hand,
using the lower bound in Assumption~\ref{assump:GFS},
$\E Z_t$ is bounded below by $cK$ where $c>0$ is uniform in $t\ll 1$ and $N>t^{-1/2}$, and
\begin{align*}
K
&=\sum_{3\leq |n|\leq N} e^{-n^2 t} n^{-d+2}|\log n|^{-1}|\log \log n|^{-1}
\\
&\gtrsim
\int_4^{N/2}  e^{-r^2 t} r|\log r|^{-1}|\log \log r|^{-1}\mrd r
\\
&=
\int_{4t^{1/2}}^{N t^{1/2}/2}  e^{- u^2} u t^{-1/2} |\log (ut^{-1/2})|^{-1}|\log \log (ut^{-1/2})|^{-1}t^{-1/2}\mrd u
\\
&\gtrsim t^{-1}|\log t|^{-1} (\log |\log t| )^{-1}\;.\qedhere
\end{align*}
\end{proof}
Define the deterministic processes $H,I \colon [0,1] \to E$ by
\begin{equ}\label{eq:H_I_def}
H_t \eqdef (B_1(T^a,T^b)-B_1(T^b,T^a))\E Z_t\;,\qquad I_t \eqdef \int_0^t H_s\mrd s\;.
\end{equ}
Observe that, by~\eqref{eq:_Z_upper}, uniformly in $t\in (0,1)$ and $N\geq1$,
\begin{equ}\label{eq:I_bound_upper}
|I_t| = \int_0^t |H_s|\mrd s
\lesssim \int_0^{t\wedge N^{-2}} N^2 \mrd s + \int_{t\wedge N^{-2}}^t s^{-1} \mrd s
= (N^2 t)\wedge 1 + \log((N^{2}t)\vee 1) \;,
\end{equ}
while by~\eqref{eq:_Z_lower}, uniformly in $0<t\ll1$ and $N>t^{-1/2}$,
\begin{equ}\label{eq:I_bound_lower}
|I_t|
\gtrsim \int_{N^{-2}}^t s^{-1}|\log s|^{-1} (\log |\log s| )^{-1} \mrd s
= \log\log\log N^{2} -\log\log\log (t^{-1})\;.
\end{equ}

\begin{lemma}\label{lem:int_I_bound}
Consider $a,b>-1$ and $p\geq 1$. Then, uniformly in $N\geq 2$ and $t\in (0,1)$,
\begin{equ}
\int_0^t (t-s)^a|I_s|^p s^b \mrd s
\lesssim  t^{a+b+1}(\log N)^p\;.
\end{equ}
%Furthermore, uniformly in $N\geq 2$ and $t\leq N^{-2}$,
%\begin{equ}
%\int_0^t (t-s)^a|I_s|^p s^b \mrd s
%\lesssim t^{a+b+p+1} N^{2p} \;.
%\end{equ}
\end{lemma}

\begin{proof}
If $t\leq 2N^{-2}$, then $|I_t|\lesssim  N^2t$ by \eqref{eq:I_bound_upper},
and therefore
\begin{equation*}
\int_0^t (t-s)^a|I_s|^p s^b \mrd s 
\lesssim \int_0^{t} (t-s)^a N^{2p} s^{b+p} \mrd s
\asymp
t^{a+b+p+1} N^{2p} \lesssim t^{a+b+1}\;.
\end{equation*}
On the other hand, if $t>2N^{-2}$ then $(t-s)^a\asymp t^a$ for $s\leq N^{-2}$ and therefore, by~\eqref{eq:I_bound_upper},
\begin{align*}
\int_0^t (t-s)^a|I_s|^p s^b \mrd s 
&\lesssim \int_0^{N^{-2}} (t-s)^a N^{2p} s^{b+p} \mrd s
+\int_{N^{-2}}^t (t-s)^a  s^b (\log N )^p \mrd s
\\
&\asymp t^{a}(N^{-2})^{b+1}
+ t^{a+b+1}(\log N)^p
\lesssim t^{a+b+1}(\log N)^p\;.\qedhere
\end{align*}
\end{proof}

\section{Deterministic bounds}\label{sec:determin}

In this section, we let $H,I\colon [0,1]\to E$ be defined as in~\eqref{eq:H_I_def} from some $\{X^N\}_{N\geq 1}$ satisfying Assumption~\ref{assump:GFS}.
We consider $N\gg1$ and suppress it from our notation.

We further fix $\delta \in (\frac12,1)$ and $\beta\in (-1,0)$ such that $\hat\beta\eqdef \beta+2(1-\delta)\in (-\frac12,0)$.
%(In fact $\hat\beta < \eta+\frac12$ where $\eta<-\frac12$.)
We also fix $\eta \in (-\frac23,-\frac12)$ such that $\eta + \hat\beta>-1$.
For $\CQ \colon (0,T)\to \CC^\beta$,
define the norm
\begin{equ}
|\CQ|_{\CC^\beta_{\delta;T}} \eqdef \sup_{t\in (0,T)} t^{\delta}|\CQ|_{\CC^\beta}\;.
\end{equ}

\begin{lemma}\label{lem:deterministic}
Suppose $u_0\in C^\infty$ and define the functions $\CN_t = B(\CP_t u_0,D\CP_t u_0)$ and $\CQ_t = \CN_t - H_t$.
Then there exist $\kappa>0$ small and $C>0$ large, not depending on $N$, such that for all $T\in(0,1)$ with
\begin{equ}\label{eq:small_time}
T^\kappa \leq \frac{1}{C(\log N)^3(1+|\CQ|_{\CC^\beta_{\delta;T}}+|u_0|_{\CC^\eta})^{3}}\;,
\end{equ}
it holds that the solution $u$ to~\eqref{eq:PDE} exists on $[0,T]$ and
\begin{equ}
\sup_{t\in [0,T]}|u_t -\CP_t u_0 - I_t|_{\CC^{\hat\beta}} \leq C(1+|\CQ|_{\CC^\beta_{\delta;T}})\;.
\end{equ}
\end{lemma}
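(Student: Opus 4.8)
The plan is to write the solution as $u_t=\CP_tu_0+I_t+v_t$ and to close a fixed-point equation for the remainder $v$. Put $g_s\eqdef\CP_su_0+I_s$; then $Dg_s=D\CP_su_0$ since each $I_s\in E$ is constant in space, and for the same reason $\CP_{t-s}H_s=H_s$, so $\int_0^t\CP_{t-s}H_s\mrd s=I_t$. Substituting the ansatz into the mild formulation $u_t=\CP_tu_0+\int_0^t\CP_{t-s}\bigl[B(u_s,Du_s)+P(u_s)\bigr]\mrd s$, expanding $B(u_s,Du_s)=B(g_s+v_s,Dg_s+Dv_s)$, and using $B(g_s,Dg_s)=\CN_s+B(I_s,D\CP_su_0)=\CQ_s+H_s+B(I_s,D\CP_su_0)$, the summands $\CP_tu_0$ and $I_t$ cancel and $v$ must solve $v=\Phi(v)$ with
\begin{equ}
\Phi(v)_t\eqdef\int_0^t\CP_{t-s}\bigl[\CQ_s+\mcR_s(v)\bigr]\mrd s\;,\quad \mcR_s(v)\eqdef B(I_s,D\CP_su_0)+B(g_s,Dv_s)+B(v_s,D\CP_su_0)+B(v_s,Dv_s)+P(g_s+v_s)\;.
\end{equ}
Here $\mcR_s$ is at most cubic in $v_s$ and carries at most one factor $I_s$ through $B$ and up to three through $P$. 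Conversely, a fixed point $v$ gives, via $u=\CP_\cdot u_0+I+v$, a mild solution of~\eqref{eq:PDE}; since $u_0$ is smooth this solution is classical on $(0,T]$ and, by uniqueness for~\eqref{eq:PDE} with smooth data (cf.\ Section~\ref{sec:intro}), coincides with $u$. So the construction yields both the existence of $u$ on $[0,T]$ and the bound $\sup_{t\le T}|v_t|_{\CC^{\hat\beta}}\le C(1+|\CQ|_{\CC^\beta_{\delta;T}})$.

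I would run the contraction in the space $\mcB_T$ of continuous $v\colon[0,T]\to\CC^{\hat\beta}$ with $v_0=0$ that additionally lie in $C((0,T],\CC^\gamma)$, normed by $\|v\|_{\mcB_T}\eqdef\sup_{t\le T}|v_t|_{\CC^{\hat\beta}}+\sup_{t\le T}t^{a}|v_t|_{\CC^\gamma}$, where — for a suitable admissible choice of $(\delta,\beta)$, with $\hat\beta$ close to $0$ — one picks $\gamma$ in the nonempty interval $(\tfrac12,\beta+2\delta)$ so that $a\eqdef\tfrac{\gamma-\beta}{2}+\delta-1\in(\tfrac14,\tfrac13)$ and $\gamma-\hat\beta<1$. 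On the product of the $\CC^{\hat\beta}$-ball of radius $\Theta\eqdef C(1+|\CQ|_{\CC^\beta_{\delta;T}})$ and the weighted $\CC^\gamma$-ball of radius $\Theta'\eqdef C(\log N)(1+|\CQ|_{\CC^\beta_{\delta;T}}+|u_0|_{\CC^\eta})$, I would show, using~\eqref{eq:small_time}, that $\Phi$ is a contraction self-map. The tools are the Schauder bounds $|\CP_rf|_{\CC^{\alpha+2\theta}}\lesssim r^{-\theta}|f|_{\CC^\alpha}$ and $|\partial_i\CP_rf|_{\CC^{\alpha+2\theta-1}}\lesssim r^{-\theta}|f|_{\CC^\alpha}$ for $\theta\ge0$, the product bound $|fh|_{\CC^{\alpha\wedge\gamma'}}\lesssim|f|_{\CC^\alpha}|h|_{\CC^{\gamma'}}$ for $\alpha+\gamma'>0$, the smoothing $|\CP_su_0|_{\CC^\alpha}\lesssim s^{-(\alpha-\eta)_+/2}|u_0|_{\CC^\eta}$, the bound $|I_s|\lesssim\log N$ from~\eqref{eq:I_bound_upper}, and $|\CQ_s|_{\CC^\beta}\le s^{-\delta}|\CQ|_{\CC^\beta_{\delta;T}}$.

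The term-by-term estimates go as follows. (i) $\int_0^t\CP_{t-s}\CQ_s\mrd s$ is bounded in $\CC^{\hat\beta}$ by $|\CQ|_{\CC^\beta_{\delta;T}}\int_0^t(t-s)^{-(1-\delta)}s^{-\delta}\mrd s\lesssim|\CQ|_{\CC^\beta_{\delta;T}}$ uniformly in $t$ — this is exactly why one sets $\hat\beta=\beta+2(1-\delta)$ — and by $t^{-a}|\CQ|_{\CC^\beta_{\delta;T}}$ in $\CC^\gamma$. (ii) The $v$-linear $B$-terms with a factor $\CP_su_0$, i.e.\ $B(\CP_su_0,Dv_s)$ and $B(v_s,D\CP_su_0)$, are treated by integrating by parts against the semigroup — writing $B_i(\CP_su_0,\partial_iv_s)=\partial_iB_i(\CP_su_0,v_s)-B_i(\partial_i\CP_su_0,v_s)$ and moving $\partial_i$ through $\CP_{t-s}$ — and estimating $v_s$ only in $\CC^{\hat\beta}$; the $s$-singularities are then integrable and the $t$-exponents strictly positive precisely because $\eta+\hat\beta>-1$. (iii) The $B$-terms with a factor $I_s$, namely $B(I_s,D\CP_su_0)$ and $B(I_s,Dv_s)$, are handled the same way, carrying an extra factor $\lesssim\log N$ and an integrable singularity. (iv) The genuinely quadratic $B(v_s,Dv_s)=\sum_iB_i(v_s,\partial_iv_s)$ — which by Assumption~\ref{assump:not-deriv} cannot be put in divergence form — is estimated through the weighted $\CC^\gamma$-seminorm, where $\gamma+(\gamma-1)>0$ makes the product licit and the $s^{-2a}$ singularity is integrable since $a<\tfrac12$. (v) The cubic $P(g_s+v_s)$ is expanded: the piece cubic in $\CP_su_0$ is $\lesssim s^{3\eta/2}|u_0|_{\CC^\eta}^{3}$, integrable precisely because $\eta>-\tfrac23$; the piece cubic in $I_s$ is $\lesssim(\log N)^{3}$; the piece cubic in $v_s$ needs $\gamma>0$ and $a<\tfrac13$; the mixed pieces interpolate. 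Every contribution other than (i) comes with a factor $T^\kappa$, $\kappa>0$ the minimum of the finitely many positive $t$-exponents, times a power at most three of $(\log N)(1+|\CQ|_{\CC^\beta_{\delta;T}}+|u_0|_{\CC^\eta})$; thus~\eqref{eq:small_time} bounds their sum by $\tfrac12\Theta$ in the $\CC^{\hat\beta}$-seminorm and by $\tfrac12\Theta'$ in the weighted $\CC^\gamma$-seminorm, while (i) is $\le\tfrac12\Theta$ resp.\ $\le\tfrac12\Theta'$ by the choice of $\Theta,\Theta'$; the same scheme for $\Phi(v)-\Phi(\bar v)$ gives the contraction. The $\CC^{\hat\beta}$-radius $\Theta=C(1+|\CQ|_{\CC^\beta_{\delta;T}})$ is then the asserted bound.

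The main obstacle is precisely this bookkeeping in (ii)--(v): one must exhibit a single admissible choice of $(\delta,\beta,\eta,\gamma,a)$ in the prescribed ranges for which every time-integral converges with a strictly positive power of $t$. The tight interplay is between $\gamma>\tfrac12$ (forced by the non-total-derivative quadratic term $B(v,Dv)$), $a=\tfrac{\gamma-\beta}{2}+\delta-1<\tfrac13$ (forced by the cubic self-interaction $v^3$), $\eta+\hat\beta>-1$ (forced, after the integration by parts, by the cross terms $B(\CP u_0,Dv)$), and $\eta>-\tfrac23$ (forced by the cubic term in $u_0$); it is exactly the standing hypotheses $\delta\in(1-\tfrac d4,1)$, $\hat\beta=\beta+2(1-\delta)\in(-\tfrac12,0)$, $\eta\in(-\tfrac23,-\tfrac12)$ and $\eta+\hat\beta>-1$ that render them simultaneously satisfiable.
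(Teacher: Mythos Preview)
Your approach is correct and follows the same overall strategy as the paper: decompose $u_t=\CP_tu_0+I_t+v_t$, derive the mild equation for $v$ (the paper writes $R$), and close a contraction on a time-weighted space that controls both the low-regularity norm $\CC^{\hat\beta}$ and a higher-regularity norm sufficient to handle the nonlinear terms.

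The implementations differ in two respects. First, the paper's auxiliary space carries the pair $\bigl(t^{-\hat\beta/2}|\cdot|_{L^\infty},\,t^{-\hat\beta/2+1/2}|\cdot|_{C^1}\bigr)$, so that all products in $B(u,Du)$ and $P(u)$ are estimated pointwise in $L^\infty$; no integration by parts is used, and the cross terms $B(\CP_su_0,DR_s)$, $B(R_s,D\CP_su_0)$ are bounded directly via $|DR_s|_{L^\infty}\le s^{\hat\beta/2-1/2}|R|_{\CX}$. You instead use a single weighted $\CC^\gamma$-norm with $\gamma>\tfrac12$ and shift the derivative onto the heat kernel via the Leibniz identity $B_i(\CP_su_0,\partial_iv_s)=\partial_iB_i(\CP_su_0,v_s)-B_i(\partial_i\CP_su_0,v_s)$. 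Both routes are valid; the paper's choice works for the full admissible range $\hat\beta\in(-\tfrac12,0)$, while your constraints $\gamma>\tfrac12$ and $a=(\gamma-\hat\beta)/2<\tfrac13$ force $\hat\beta>-\tfrac16$, which is why you must take ``$\hat\beta$ close to $0$''. This is harmless for the application in Section~\ref{sec:proof_thm}, where $(\delta,\beta,\eta)$ are free to choose, but it does not prove the lemma for every choice allowed in the preamble of Section~\ref{sec:determin}. Second, the paper contracts on a single ball of radius $K\asymp1+|\CQ|_{\CC^\beta_{\delta;T}}$ in the full norm and invokes Lemma~\ref{lem:int_I_bound} for the $|I_s|^p$-integrals, whereas you use two radii $\Theta,\Theta'$ (the latter carrying an explicit $\log N$) and only the cruder bound $|I_s|\lesssim\log N$; either bookkeeping closes.
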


\begin{proof}
%The proof is rather straightforward using Lemma~\ref{lem:int_I_bound} and
%but we give details for completeness.
We decompose
$
u_t =
\CP_t u_0 + R_t + I_t
$
where $R$ solves the fixed point
\begin{equs}
R_t &= \CM_t(R) \eqdef \int_0^t \CP_{t-s}(B(u_s, Du_s) + P(u_s)) \mrd s - I_t\label{eq:CM_def}
\\
&= \int_0^t \CP_{t-s} [\CQ_s + B(u_s, D R_s ) + B( R_s + I_s, D\CP_s u_0) + P(u_s)]\mrd s\;,
\end{equs}
where we used that $H_t$ and $I_t$ are constant in space in the final equality.
Define the Banach space $\CX = \CX_T = \{f \in C((0,T), C^1)\,:\, |f|_\CX<\infty\}$ with norm
\begin{equ}
|f|_{\CX} \eqdef \sup_{t\in (0,T)} |f_t|_{\CC^{\hat\beta}} +
t^{-\hat\beta/2}|f_t|_{L^\infty} + t^{-\frac{\hat\beta}2+\frac12} |f_t|_{C^1}\;.
\end{equ}
We will show that, for $T$ satisfying~\eqref{eq:small_time}, $\CM$ is a contraction on the ball of radius $K\asymp 1+|\CQ|_{\CC^\beta_{\delta;T}}$ in $\CX$, from which the conclusion follows.

Define $\CY_t = \int_0^t \CP_{t-s} \CQ_s\mrd s$. Recalling the heat flow estimates for $\alpha<0$ and $\gamma\geq 0$ (see, e.g.~\cite[Lem.~A.7]{GIP15} and the proof of~\cite[Thm.~2.34]{BookChemin})
\begin{equ}[eq:heat_flow]
t^{-\alpha/2}|\CP_t f|_{L^\infty} + t^{1/2-\alpha/2}|\CP_t f|_{C^1} + t^{\gamma/2}|\CP_t f|_{\CC^{\alpha+\gamma}}\lesssim  |f|_{\CC^\alpha}\;,
\end{equ}
we obtain
\begin{equ}
|\CY_t|_{\CC^{\hat\beta}} \lesssim |\CQ|_{\CC^\beta_{\delta;t}}\int_0^t (t-s)^{\beta/2-\hat\beta/2} s^{-\delta}\mrd s \asymp |\CQ|_{\CC^\beta_{\delta;t}}\;.
\end{equ}
Similarly
$
|\CY_t|_{L^\infty}
\lesssim 
%|\CQ|_{\CC^\beta_{\delta;t}}\int_0^t (t-s)^{\beta/2} s^{-\delta}\mrd s \asymp 
|\CQ|_{\CC^\beta_{\delta;t}}t^{\hat\beta/2}
$
and $|\CY_t|_{C^1} \lesssim |\CQ|_{\CC^\beta_{\delta;t}}t^{\hat\beta/2-\frac12}$ (in the last bound we used $\beta>-1$ so that $\beta/2 - \frac12 >-1$),
thus $
|\CY|_{\CX} \lesssim |\CQ|_{\CC^\beta_{\delta;T}}$.
Furthermore,
\begin{equs}
t^{-\hat\beta/2}\int_0^t |\CP_{t-s} B(\CP_s u_0, DR_s) + B(R_s,D\CP_s u_0)|_{L^\infty}\mrd s
&\lesssim |R|_{\CX}|u_0|_{\CC^\eta} t^{-\hat\beta/2}\int_0^t s^{\eta/2+\hat\beta/2-\frac12} \mrd s
\\
&\asymp t^{\frac12+\frac\eta2}|R|_{\CX}|u_0|_{\CC^\eta}\;,
\end{equs}
where we used $\hat\beta+\eta>-1$,
and
\begin{equ}
t^{-\hat\beta/2}\int_0^t |\CP_{t-s} B(R_s, DR_s)|_{L^\infty}\mrd s
\lesssim |R|_{\CX}^2 t^{-\hat\beta/2}\int_0^t s^{\hat\beta-\frac12} \mrd s
\asymp |R|_{\CX}^2t^{\frac12+\hat\beta/2}\;,
\end{equ}
where we used $\hat\beta>-\frac12$.
Moreover, by Lemma~\ref{lem:int_I_bound} with $p=1$,
\begin{equ}
t^{-\hat\beta/2}\int_0^t |\CP_{t-s} B(I_s, D\CP_su_0)|_{L^\infty}\mrd s
\lesssim |u_0|_{\CC^\eta} t^{-\hat\beta/2}\int_0^t |I_s|s^{\eta/2-1/2} \mrd s
\lesssim |u_0|_{\CC^\eta}  t^{\frac12+\eta/2-\hat\beta/2} \log N
\end{equ}
and
\begin{equ}
t^{-\hat\beta/2}\int_0^t |\CP_{t-s} B(I_s, DR_s)|_{L^\infty}\mrd s
\lesssim |R|_{\CX}t^{-\hat\beta/2}\int_0^t |I_s|s^{\hat\beta/2-1/2} \mrd s
\lesssim t^{\frac12}|R|_{\CX} \log N\;.
\end{equ}
Exactly the same bounds hold for the norm $|\cdot|_{C^1}$ with $t^{-\hat\beta/2}$ replaced by $t^{-\hat\beta/2+\frac12}$ in the first terms.
Finally, the same bounds obviously hold for the norm $|\cdot|_{\CC^{\hat\beta}} \lesssim |\cdot|_{L^\infty}$.
This bounds the first set of terms in the definition~\eqref{eq:CM_def} of $\CM_t$.
It remains to bound $\int_0^t \CP_{t-s} P(u_s)\mrd s$ for which we have, by Lemma~\ref{lem:int_I_bound} with $p=3$,
\begin{equs}
t^{-\hat\beta/2}\int_0^t |\CP_{t-s} P(u_s)|_{L^\infty}\mrd s
&\lesssim t^{-\hat\beta/2}\int_0^t(1+ s^{\eta/2}|u_0|_{\CC^\eta} + s^{\hat\beta/2}|R|_{\CX}
+ |I_s|)^3 \mrd s
\\
&\lesssim t^{-\hat\beta/2}(t + t^{3\eta/2+1}|u_0|_{\CC^\eta}^3 + t^{3\hat\beta/2+1}|R|^3_{\CX} + t(\log N)^3)\;.
\end{equs}
Again the same bound obviously holds for the norm $|\cdot|_{\CC^{\hat\beta}} \lesssim |\cdot|_{L^\infty}$,
and also for the norm $|\cdot|_{C^1}$ with $t^{-\hat\beta/2}$ replaced by $t^{-\hat\beta/2+\frac12}$ in the first term.
In conclusion, for $\kappa=\min\{\frac12+\frac\eta2+\frac{\hat\beta}2,\frac12+{\hat\beta},\frac{3\eta}{2}+1\}$,
\begin{equ}
|\CM(R)|_{\CX} \lesssim |\CQ|_{\CC^\beta_{\delta;T}}
+ T^\kappa(\log N)^3 (1+|R|_{\CX} +|u_0|_{\CC^\eta})^3\;. 
\end{equ}
In exactly the same way,
\begin{equ}
|\CM(R) - \CM(\bar R)|_{\CX} \lesssim T^{\kappa}(\log N)^3|R-\bar R|_\CX(1+|u_0|_{\CC^\eta} + |R|_{\CX}+|\bar R|_{\CX})^2\;.
\end{equ}
It follows that for $K\asymp 1+|\CQ|_{\CC^\beta_{\delta;T}}$
and $T^\kappa(\log N)^3(K + |u_0|_{\CC^\eta})^3\lesssim K$, $\CM$ is a contraction on the ball in $\CX$ of radius $K$ as desired.
\end{proof}

\section{Proof of Theorem~\ref{thm:main}}\label{sec:proof_thm}

We are now ready for the proof of Theorem~\ref{thm:main}.
Consider $M>3/\kappa$ for the $\kappa$ from Lemma~\ref{lem:deterministic}, and define $T= (\log N)^{-M}$ for $N\gg1$.
Define $\CN,\CQ\colon [0,1]\to C^\infty$ as in Lemma~\ref{lem:deterministic}
and $I\colon[0,1]\to E$ as in~\eqref{eq:H_I_def}.
Consider $\delta,\beta,\eta,\hat\beta$ as in the previous section
such that furthermore $\delta>1-\frac d4$.

The moments of $|u_0|_{\CC^\eta}$ are bounded uniformly in $N$ (Lemma~\ref{lem:Besov_moments}), and, by Lemmas~\ref{lem:non_zero_modes} and~\ref{lem:Z-EZ}, so are
the moments of $|\CQ|_{\CC^\beta_{\delta;1}}$.
Therefore, for $C$ as in Lemma~\ref{lem:deterministic},
\[
\lim_{N\to\infty} \P\big [T^\kappa \leq C^{-1}(\log N)^{-3}(1+|\CQ|_{\CC^\beta_{\delta;T}}+|u_0|_{\CC^\eta})^{-3}
\big] = 1\;.
\]
Hence, by Lemma~\ref{lem:deterministic},
\begin{equ}
\lim_{N\to\infty}\P\big[u \textnormal{ exists on } [0,T]\times\T^d \;\; \&\;\;
|u_T - \CP_T u_0 - I_T|_{\CC^{\hat\beta}} \leq C(1+|\CQ|_{\CC^\beta_{\delta;1}})\big] = 1\;.
\end{equ}
Finally, since $|I_T| \gtrsim \log\log\log N$ by~\eqref{eq:I_bound_lower}, and
\begin{equ}
|\Pi_0(u_T - \CP_Tu_0 - I_T)| + |\Pi_0\CP_Tu_0| < |I_T|/2 \Rightarrow
|\Pi_0 u_T| > |I_T|/2 \;,
\end{equ}
we obtain~\eqref{eq:prob_norm_inflation} for $c>0$ sufficiently small, which concludes the proof of Theorem~\ref{thm:main}. \qed

\appendix

\section{Besov regularity of Gaussian free fields}\label{app:GFS}

In this appendix, we consider
a (real) GFS $X$ (see Definition~\ref{def:GFS}) 
and denote $\sigma^2(n)\eqdef \E|X_n|^2$.
If $\sigma^2(n)\lesssim |n|^{\gamma}$ for $n\neq 0$, then, for any $\eta<-(d+\gamma)/2$,
it is classical that $X \in \CC^{\eta}$ and $|X^N-X|_{\CC^\eta} \to 0$ a.s. as $N\to\infty$, where $X^N\eqdef \Pi_N X$.
The following result shows that, if $\sigma^2(n)$ is logarithmically smaller,
then we have improved behaviour.

\begin{proposition}\label{prop:Besov_reg}
Let $\gamma,\theta,\eta\in \R$ and suppose
\begin{equ}
K\eqdef \sup_{|n|\geq 3}\frac{\sigma^2(n)}{n^{\gamma}(\log n)^{\theta}(\log\log n)^{\eta}} <\infty.
\end{equ}
\begin{enumerate}[label=(\roman*)]
\item\label{pt:-1} If $\theta=-1$ and $\eta<0$, then almost surely $
\lim_{N\to\infty}|X^N-X|_{\CC^{-(d+\gamma)/2}}= 0$.

\item\label{pt:<-1} If $\theta<-1$ and $q > -2/(\theta+1)$, then almost surely $
\lim_{N\to\infty}|X^N-X|_{B^{-(d+\gamma)/2}_{\infty,q}}= 0$.
Furthermore,
$\sup_N \E |X^N|_{B^{-(d+\gamma)/2}_{\infty,q}}^p$
is finite  for all $p\in [1,\infty)$ and depends only on $p,q,\gamma,\theta,\eta,d,K$, and $\{\sigma^2(n)\}_{|n|\leq 2}$.
\end{enumerate}
\end{proposition}

Part~\ref{pt:-1} of Proposition~\ref{prop:Besov_reg} is used to prove Corollary~\ref{cor:endpoint},
while part~\ref{pt:<-1} proves Lemma \ref{lem:Besov_moments}.
For the proof of Proposition~\ref{prop:Besov_reg},
recall that if $Z$ is an $\R$-valued centred continuous Gaussian process on $\T^d$ (or, more generally, on a totally bounded metric space), then Dudley's theorem (see, e.g.,~\cite[Thm.~3.18]{Massart07}) implies that
\begin{equ}\label{eq:max_expectation}
\E |Z|_{L^\infty} \leq 24 \int_0^\sigma \sqrt{\log N_\rho(r)}\mrd r\;,
\end{equ}
where $N_\rho(r)$ is the minimum number of $\rho$-balls of radius $r/2$ needed to cover $\T^d$, $\rho$ is the metric on $\T^d$ defined by
\begin{equ}
\rho(x,y)^2 = \E[|Z(x)-Z(y)|^2]\;,
\end{equ}
and
\begin{equ}
\sigma^2 = \sup_{x\in\T^d} \E |Z(x)|^2\;.
\end{equ}
Furthermore, one has the concentration inequality (see~\cite[Prop.~3.19]{Massart07}) for all $M\geq 0$
\begin{equ}\label{eq:concentration}
\P[||Z|_{L^\infty} - \E |Z|_{L^\infty}| \geq M] \leq 2e^{-\frac12 M^2/\sigma^2}\;.
\end{equ}

\begin{proof}[Proof of Proposition~\ref{prop:Besov_reg}]
Consider $\ell \geq 1$ and define
$Z_\ell = 2^{-\ell(d+\gamma)/2}\Delta_\ell (X-X^N)$.
Clearly $Z_\ell=0$ for $\ell \lesssim \log N$.
We henceforth consider $\ell\gtrsim\log N$.
Then
\begin{equ}
\sigma_{Z_\ell}^2\eqdef \E |Z_\ell(x)|^2 \lesssim \sum_{k} \chi_\ell(k)^2 2^{-\ell(d+\gamma)}\E[|X_k|^2]
\lesssim 2^{d\ell}2^{-\ell(d+\gamma)}2^{\ell \gamma}\ell^{\theta}(\log \ell)^{\eta}
=\ell^{\theta}(\log \ell)^{\eta}
\end{equ}
uniformly in $\ell,N$.
Applying~\eqref{eq:concentration}, we obtain for $M >0$,
\begin{equs}[eq:P_diff_bound]
\P[\ell^{-\theta/2}(\log \ell)^{-\eta/2}||Z_\ell|_{L^\infty} - \E |Z_\ell|_{L^\infty}| > M ]
&=
\P[||Z_\ell|_{L^\infty} - \E |Z_\ell|_{L^\infty}| > \ell^{\theta/2}(\log \ell)^{\eta/2}M]
\\
&\leq 2\exp\Big(-\frac12 \ell^{\theta}(\log \ell )^{\eta} M^2/\sigma_{Z_\ell}^2\Big)
\\
&\leq 2e^{-M^2/C}\;,
\end{equs}
for $C$ uniform in $\ell,M,N$.
%In particular, $\ell^{-\theta/2}(\log \ell)^{1/2}||Z_\ell|_{L^\infty} - \E |Z_\ell|_{L^\infty}|$ has Gaussian tails uniform in $\ell$, and therefore, for all $q\geq 1$,
%\begin{equ}\label{eq:uniform_tails}
%\E ||Z_\ell|_{L^\infty} - \E |Z_\ell|_{L^\infty}|^q \lesssim_q \ell^{\theta q/2}(\log\ell)^{-q/2}\;.
%\end{equ}
To estimate $\E |Z_\ell|_{L^\infty}$, observe that
\begin{equs}
\rho(x,y)^2 &\eqdef \E|Z_\ell(x)-Z_\ell(y)|^2
%=\E\Big|\sum_k \chi_\ell(k) 2^{-\ell/2}X_k(e^{\mbi kx} - e^{\mbi k y}) \Big|^2
\lesssim \sum_k \chi_\ell(k)^2 2^{-\ell(d+\gamma)}\E|X_k|^2|e^{\mbi k x} - e^{\mbi k y}|^2
\\
&\lesssim
2^{\ell d} 2^{-\ell(d+\gamma)} 2^{\ell  \gamma}\ell^{\theta}(\log \ell)^{\eta}(1\wedge 2^{2\ell}|x-y|^2)
\\
&= \ell^{\theta}(\log \ell)^{\eta}(1\wedge 2^{2\ell}|x-y|^2)\;.
\end{equs}
Therefore, for $r\leq \sigma_{Z_\ell}$,
\begin{equ}
B^\rho_x(r/2) \supset B_x(C 2^{-\ell}\ell^{-\theta/2}(\log\ell)^{-\eta/2}r/2)\;,
\end{equ}
where $B^\rho_x(\eps)$ and $B_x(\eps)$ are the $\rho$- and Euclidean-balls respectively centred at $x$ of radius $\eps$, and where $C>0$ does not depend on $\ell,r$.
Therefore $N_\rho(r) \lesssim 2^{\ell d}\ell^{\theta d/2}(\log\ell)^{\eta d/2}r^{-d}$.
It follows from~\eqref{eq:max_expectation} that, uniformly in $\ell$,
\begin{equ}\label{eq:E_Z_bound}
\E|Z_\ell|_{L^\infty} \lesssim \int_0^{\ell^{\theta/2}(\log\ell)^{\eta/2}} 
(\ell d - d\log r)^{1/2} \mrd r
%\lesssim 1 + \int_0^{1}\log r
\lesssim \ell^{\theta/2+1/2}(\log\ell)^{\eta/2}\;.
\end{equ}
\textit{Proof of~\ref{pt:-1}.} Suppose now that $\theta=-1$.
Then,
for all $\eps>0$,
\begin{equ}
\P[||Z_\ell|_{L^\infty} - \E |Z_\ell|_{L^\infty}| > \eps] \leq 2e^{-\ell(\log\ell)^{-\eta} \eps^2/C}\;,
\end{equ}
which is clearly summable in $\ell$.
Suppose further that $\eta<0$. Then 
$\E|Z_\ell|_{L^\infty} \lesssim (\log\ell)^{\eta/2} \to 0$.
Therefore,
by the Borel--Cantelli lemma, for all $\eps > 0$,
\begin{equ}
\P\Big[\limsup_{\ell\to\infty} |Z_\ell|_{L^\infty} > \eps\Big] = 0\;.
\end{equ} 
Hence, almost surely, as $N\to\infty$,
\begin{equ}
|X^N-X|_{B^{-(d+\gamma)/2}_{\infty,\infty}} \leq \sup_{\ell\gtrsim \log N} |Z_\ell|_{L^\infty} \to 0\;.
\end{equ}
\textit{Proof of~\ref{pt:<-1}.}
Suppose now $\theta<-1$, $\eta\in\R$ arbitrary, and $q > -2/(\theta+1)$.
Then
\begin{equ}[eq:XN_X_diff]
|X^N-X|_{B^{-(d+\gamma)/2}_{\infty,q}}^q
\leq \sum_{\ell\gtrsim\log N} |Z_\ell|_{L^\infty}^q
\lesssim \sum_{\ell\gtrsim\log N} ||Z_\ell|_{L^\infty}-\E |Z_\ell|_{L^\infty}|^q +
\sum_{\ell\gtrsim\log N}(\E |Z_\ell|_{L^\infty})^q\;.
\end{equ}
For $\eps>0$ sufficiently small, the second sum is bounded by a multiple of $(\log N)^{q(\theta/2+1/2)+1+\eps}$ due to~\eqref{eq:E_Z_bound} since $q(\theta/2 + 1/2)<-1$,
and thus converges to $0$ as $N\to\infty$.
Furthermore, by~\eqref{eq:P_diff_bound},
\begin{equ}
\P[||Z_\ell|_{L^\infty} - \E |Z_\ell|_{L^\infty}|^q > \ell^{q(\theta/2+1/2)}] \leq 2e^{-\ell(\log\ell)^{-\eta} /C}\;,
\end{equ}
which is summable in $\ell$.
Therefore, again by the Borel--Cantelli lemma,
\begin{equation*}
\P\Big[\lim_{N\to\infty}\sum_{\ell\gtrsim\log N} ||Z_\ell|_{L^\infty} - \E |Z_\ell|_{L^\infty}|^q = 0\Big] =1\;,
\end{equation*}
which proves the almost sure convergence.

For the final claim that $\sup_N \E |X^N|_{B^{-(d+\gamma)/2}_{\infty,q}}^p$
is finite and depends only on the given parameters, it suffices to consider $p=q$  since, by Fernique's theorem, $\E|Y|^p \leq C_p (\E|Y|)^p$ for any centred Gaussian random variable $Y$ with values in a separable Banach space\footnote{While $B^\beta_{\infty,q}$ is not separable, we can work with the closure of smooth functions therein, which is separable.} (see, e.g.  \cite[Cor.~3.2.7]{Stroock23}).
Then
\begin{equ}
\E |X^N-X|_{B^{-(d+\gamma)/2}_{\infty,q}}^q \leq \sum_{\ell\gtrsim \log N} \E |Z_\ell|_{L^\infty}^q \lesssim \sum_{\ell\gtrsim\log N} (\E |Z_\ell|_{L^\infty})^q\;,
\end{equ}
where in the final bound we again used $\E|Z_\ell|_{L^\infty}^q \lesssim  (\E|Z|_{L^\infty})^q$ by Fernique's theorem.
The final sum is finite and bounded uniformly in $N$ by the argument following \eqref{eq:XN_X_diff}.
\end{proof}

\section{Well-posedness in the regime \texorpdfstring{$\eta>-\frac12$}{eta > -1/2}}
\label{app:large_eta}

In this appendix, we prove well-posedness of \eqref{eq:PDE}
in the regime $u_0\in \CC^{\eta}$ for $\eta>-\frac12$
(we do not require here that $B$ is not a total derivative).
We follow the notation of Sections \ref{sec:intro} and \ref{subsec:Besov} (and start of Section \ref{sec:decor}).

\begin{proposition}
For all $\eta\in(-\frac12,0)$, there exists $\delta>0$ with the following property.
For $T>0$, define
the Banach space $\CX_T = \{f \in C((0,T), C^1)\,:\, |f|_{\CX_T}<\infty\}$ with norm
\begin{equ}
|f|_{\CX_T} \eqdef \sup_{t\in (0,T)} t^{-\eta-\frac12}|f_t|_{L^\infty} + t^{-\eta} |f_t|_{C^1}\;.
\end{equ}
For all $K>1$ and $u_0\in \CB^\eta_K \eqdef \{v\in \CC^\eta\,:\, |v|_{\CC^\eta}<K\}$,
if $T^{\eta+1} < \delta K^2$,
then there exists a unique $R \in \CX_T$
such that $u \eqdef \CP u_0 + R$ solves $\partial_t u = \Delta u + B(u, Du) + P(u)$ on $(0,T)\times \T^d$
and the map $\CB^\eta_K\ni u_0\mapsto R\in\mcX_T$ is $\delta^{-1} K$-Lipschitz.
\end{proposition}

\begin{proof}
For $v\in \CC^\eta$ and $R\in \CX$, let us denote $u_t = \CP_t v + R_t$ and define
\[
\CM^v_t(R) = \int_0^t \{B(u_s,Du_s) + P(u_s)\}\mrd s\;.
\]
By the elementary bound $\int_0^t s^a(t-s)^b \mrd s \asymp t^{a+b+1}$ for $a,b>-1$,
we have
\begin{align*}
t^{-\eta-\frac12}|\CM^v_t(R)|_{L^\infty}
&\lesssim t^{-\eta-\frac12}\int_0^t \{|u_s|_{L^\infty}|u_s|_{C^1}+(1+|u_s|_{L^\infty})^3\}\mrd s
\\
&\lesssim 
t^{-\eta-\frac12}
\int_0^t \{s^{\eta-\frac12}|v|_{\CC^\eta}^2 + s^{2\eta+\frac12}|R|_{\CX_T}^2 + 1 + s^{3\eta/2}|v|_{\CC^\eta}^3 + s^{3\eta+\frac32}|R|_{\CX_T}^3\} \mrd s
\\
&\asymp
|v|^2_{\CC^\eta} + t^{\eta+1}|R|_{\CX_T}^2 + t^{-\eta+\frac12} + t^{\eta/2+\frac12}|v|_{\CC^\eta}^3 + t^{2\eta+2}|R|_{\CX_T}^3\;,
\end{align*}
where we used the heat flow estimates \eqref{eq:heat_flow} in the second line and that $\eta>-\frac12$ in the third line
to estimate the integral $\int_0^t s^{\eta-\frac12}|v|_{\CC^\eta}^2\mrd s \asymp t^{\eta+\frac12}|v|^2_{\CC^\eta}$.
Using again \eqref{eq:heat_flow}, the same bound holds with $t^{-\eta-\frac12}|\CM^v_t(R)|_{L^\infty}$ replaced by $t^{-\eta}|\CM^v_t(R)|_{C^1}$.

It follows that there exists $\eps>0$ such that, for all $M>0$ sufficiently large, $K>1$, and $v\in \CB^\eta_K$,
if $T^{\eta+1}\leq \eps M^{-1}K^{-2}$,
then $\CM^v$ stabilises the ball in $\CX_T$ centred at $0$ of radius $M K^2$.

Likewise, for another $\bar v\in \CC^\eta$ and $\bar R\in \CX_T$,
\begin{equs}[eq:diff_bound]
t^{-\eta-\frac12}|\CM^v_t(R)-\CM^{\bar v}_t(\bar R)|_{L^\infty}
&\lesssim
(|v|_{\CC^\eta}+|\bar v|_{\CC^\eta}+t^{\eta/2+\frac12}(|R|_{\CX_T}+|\bar R|_{\CX_T}))
\\
&\qquad(
|v-\bar v|_{\CC^\eta}+t^{\eta/2+\frac12}|R-\bar R|_{\CX_T}
)\notag
\\
&+
(1+t^\eta(|v|_{\CC^\eta}+|\bar v|_{\CC^\eta})^2+t^{2\eta+1}(|R|_{\CX_T}+|\bar R|_{\CX_T})^2)\notag
\\
&\qquad
(
t^{-\frac\eta2+\frac12}|v-\bar v|_{\CC^\eta}+t|R-\bar R|_{\CX_T}\notag
)
\;.
\end{equs}
Again the same bound holds with $t^{-\eta-\frac12}|\cdots|_{L^\infty}$ replaced by $t^{-\eta}|\cdots|_{C^1}$.
Applying this with $v=\bar v\in \CB^\eta_K$ shows that $\CM^v$ is a contraction on the ball in $\CX_T$ centred at $0$ of radius
$MK^2$ provided $T^{\eta+1}\leq \eps M^{-1}K^{-2}$.
This proves the existence of a unique fixed point $R=\CM^v(R)$ in this ball.
The uniqueness of $R$ in all of $\CX_T$ follows by restarting the equation over sufficiently short time intervals.

Furthermore, applying \eqref{eq:diff_bound} to two fixed points $R=\CM^v(R)$ and $\bar R=\CM^{\bar v}(\bar R)$ with $v,\bar v\in \CB^\eta_K$
shows that $|R-\bar R|_{\CX_T} \lesssim \eps^{1/2}M^{1/2}K|v-\bar v|_{\CC^\eta}$ provided $T^{\eta+1}\leq \eps M^{-1}K^{-2}$,
which proves the claimed local Lipschitz property.
\end{proof}

\subsubsection*{Acknowledgements}

The author would like to thank the anonymous referees for their detailed reading and comments that have helped to improve the paper.

\subsubsection*{Data availability statement} Data sharing not applicable to this article as no datasets were generated or analysed during the current study.

\subsubsection*{Funding} The author acknowledges support by the Engineering and Physical Sciences Research Council via the New Investigator Award EP/X015688/1.

\bibliographystyle{./Martin}
\bibliography{./refs}

\def\cprime{$'$} \def\polhk#1{\setbox0=\hbox{#1}{\ooalign{\hidewidth
  \lower1.5ex\hbox{`}\hidewidth\crcr\unhbox0}}}
\begin{thebibliography}{CCHS22b}
\def\myhref#1#2{\href{#2}{\nolinkurl{#1}}}

\bibitem[BC23]{BC23}
\textsc{B.~{Bringmann}} and \textsc{S.~{Cao}}.
\newblock {A para-controlled approach to the stochastic Yang-Mills equation in
  two dimensions}.
\newblock \emph{arXiv e-prints} (2023).
\newblock \myhref{arXiv:2305.07197}{https://arxiv.org/abs/2305.07197}.

\bibitem[BCCH21]{BCCH21}
\textsc{Y.~Bruned}, \textsc{A.~Chandra}, \textsc{I.~Chevyrev}, and
  \textsc{M.~Hairer}.
\newblock Renormalising {SPDE}s in regularity structures.
\newblock \emph{J. Eur. Math. Soc. (JEMS)} \textbf{23}, no.~3, (2021),
  869--947.
\newblock \myhref{doi:10.4171/jems/1025}{https://dx.doi.org/10.4171/jems/1025}.

\bibitem[BCD11]{BookChemin}
\textsc{H.~Bahouri}, \textsc{J.-Y. Chemin}, and \textsc{R.~Danchin}.
\newblock \emph{Fourier analysis and nonlinear partial differential equations},
  vol. 343 of \emph{Grundlehren der mathematischen Wissenschaften [Fundamental
  Principles of Mathematical Sciences]}.
\newblock Springer, Heidelberg, 2011,  xvi+523.
\newblock
  \myhref{doi:10.1007/978-3-642-16830-7}{https://dx.doi.org/10.1007/978-3-642-16830-7}.

\bibitem[BOP19]{BOP19}
\textsc{A.~B\'{e}nyi}, \textsc{T.~Oh}, and \textsc{O.~Pocovnicu}.
\newblock On the probabilistic {C}auchy theory for nonlinear dispersive {PDE}s.
\newblock In \emph{Landscapes of time-frequency analysis}, Appl. Numer. Harmon.
  Anal.,  1--32. Birkh\"{a}user/Springer, Cham, 2019.
\newblock
  \myhref{doi:10.1007/978-3-030-05210-2_1}{https://dx.doi.org/10.1007/978-3-030-05210-2_1}.

\bibitem[Bou94]{Bourgain94}
\textsc{J.~Bourgain}.
\newblock Periodic nonlinear {S}chr\"{o}dinger equation and invariant measures.
\newblock \emph{Comm. Math. Phys.} \textbf{166}, no.~1, (1994), 1--26.
\newblock
  \myhref{doi:10.1007/BF02099299}{https://dx.doi.org/10.1007/BF02099299}.

\bibitem[BP08]{BP08}
\textsc{J.~Bourgain} and \textsc{N.~Pavlovi\'{c}}.
\newblock Ill-posedness of the {N}avier-{S}tokes equations in a critical space
  in 3{D}.
\newblock \emph{J. Funct. Anal.} \textbf{255}, no.~9, (2008), 2233--2247.
\newblock
  \myhref{doi:10.1016/j.jfa.2008.07.008}{https://dx.doi.org/10.1016/j.jfa.2008.07.008}.

\bibitem[BT08]{Burq_Tzvetkov_08}
\textsc{N.~Burq} and \textsc{N.~Tzvetkov}.
\newblock Random data {C}auchy theory for supercritical wave equations. {I}.
  {L}ocal theory.
\newblock \emph{Invent. Math.} \textbf{173}, no.~3, (2008), 449--475.
\newblock
  \myhref{doi:10.1007/s00222-008-0124-z}{https://dx.doi.org/10.1007/s00222-008-0124-z}.

\bibitem[CC21]{Sourav_state}
\textsc{S.~{Cao}} and \textsc{S.~{Chatterjee}}.
\newblock {A state space for 3D Euclidean Yang-Mills theories}.
\newblock \emph{arXiv e-prints} (2021).
\newblock \myhref{arXiv:2111.12813}{https://arxiv.org/abs/2111.12813}.

\bibitem[CC23]{Sourav_flow}
\textsc{S.~Cao} and \textsc{S.~Chatterjee}.
\newblock The {Y}ang-{M}ills heat flow with random distributional initial data.
\newblock \emph{Comm. Partial Differential Equations} \textbf{48}, no.~2,
  (2023), 209--251.
\newblock \myhref{arXiv:2111.10652}{https://arxiv.org/abs/2111.10652}.
\newblock
  \myhref{doi:10.1080/03605302.2023.2169937}{https://dx.doi.org/10.1080/03605302.2023.2169937}.

\bibitem[CCHS22a]{CCHS20}
\textsc{A.~{Chandra}}, \textsc{I.~{Chevyrev}}, \textsc{M.~{Hairer}}, and
  \textsc{H.~{Shen}}.
\newblock {Langevin dynamic for the 2D Yang-Mills measure}.
\newblock \emph{Publ. math. IHÉS} (2022).
\newblock \myhref{arXiv:2006.04987}{https://arxiv.org/abs/2006.04987}.
\newblock
  \myhref{doi:10.1007/s10240-022-00132-0}{https://dx.doi.org/10.1007/s10240-022-00132-0}.

\bibitem[CCHS22b]{CCHS22}
\textsc{A.~{Chandra}}, \textsc{I.~{Chevyrev}}, \textsc{M.~{Hairer}}, and
  \textsc{H.~{Shen}}.
\newblock {Stochastic quantisation of Yang-Mills-Higgs in 3D}.
\newblock \emph{arXiv e-prints} (2022).
\newblock \myhref{arXiv:2201.03487}{https://arxiv.org/abs/2201.03487}.

\bibitem[CCT03]{CCT03}
\textsc{M.~{Christ}}, \textsc{J.~{Colliander}}, and \textsc{T.~{Tao}}.
\newblock {Ill-posedness for nonlinear Schrodinger and wave equations}.
\newblock \emph{arXiv Mathematics e-prints} (2003).
\newblock \myhref{arXiv:math/0311048}{https://arxiv.org/abs/math/0311048}.

\bibitem[CD14]{Cheskidov_Dai_14}
\textsc{A.~Cheskidov} and \textsc{M.~Dai}.
\newblock Norm inflation for generalized {N}avier-{S}tokes equations.
\newblock \emph{Indiana Univ. Math. J.} \textbf{63}, no.~3, (2014), 869--884.
\newblock
  \myhref{doi:10.1512/iumj.2014.63.5249}{https://dx.doi.org/10.1512/iumj.2014.63.5249}.

\bibitem[CG23]{Camps_Gassot_23}
\textsc{N.~Camps} and \textsc{L.~Gassot}.
\newblock Pathological {S}et of {I}nitial {D}ata for {S}caling-{S}upercritical
  {N}onlinear {S}chr\"{o}dinger {E}quations.
\newblock \emph{Int. Math. Res. Not. IMRN} , no.~15, (2023), 13214--13254.
\newblock
  \myhref{doi:10.1093/imrn/rnac194}{https://dx.doi.org/10.1093/imrn/rnac194}.

\bibitem[Che19]{Chevyrev19YM}
\textsc{I.~Chevyrev}.
\newblock Yang-{M}ills measure on the two-dimensional torus as a random
  distribution.
\newblock \emph{Comm. Math. Phys.} \textbf{372}, no.~3, (2019), 1027--1058.
\newblock
  \myhref{doi:10.1007/s00220-019-03567-5}{https://dx.doi.org/10.1007/s00220-019-03567-5}.

\bibitem[Che22]{Chevyrev22_YM}
\textsc{I.~Chevyrev}.
\newblock Stochastic quantization of {Yang–Mills}.
\newblock \emph{Journal of Mathematical Physics} \textbf{63}, no.~9, (2022),
  091101.
\newblock Proceedings of ICMP XX (2021).
\newblock \myhref{arXiv:2202.13359}{https://arxiv.org/abs/2202.13359}.
\newblock \myhref{doi:10.1063/5.0089431}{https://dx.doi.org/10.1063/5.0089431}.

\bibitem[CK17]{Carles_Kappeler_17}
\textsc{R.~Carles} and \textsc{T.~Kappeler}.
\newblock Norm-inflation with infinite loss of regularity for periodic {NLS}
  equations in negative {S}obolev spaces.
\newblock \emph{Bull. Soc. Math. France} \textbf{145}, no.~4, (2017), 623--642.
\newblock
  \myhref{doi:10.24033/bsmf.2749}{https://dx.doi.org/10.24033/bsmf.2749}.

\bibitem[COW22]{COW22}
\textsc{I.~{Chevyrev}}, \textsc{T.~{Oh}}, and \textsc{Y.~{Wang}}.
\newblock {Norm inflation for the cubic nonlinear heat equation above the
  scaling critical regularity}.
\newblock \emph{arXiv e-prints} (2022).
\newblock \myhref{arXiv:2205.14488}{https://arxiv.org/abs/2205.14488}.

\bibitem[CP18]{Choffrut_Pocovnicu_18}
\textsc{A.~Choffrut} and \textsc{O.~Pocovnicu}.
\newblock Ill-posedness of the cubic nonlinear half-wave equation and other
  fractional {NLS} on the real line.
\newblock \emph{Int. Math. Res. Not. IMRN} , no.~3, (2018), 699--738.
\newblock
  \myhref{doi:10.1093/imrn/rnw246}{https://dx.doi.org/10.1093/imrn/rnw246}.

\bibitem[CS23]{ChevyrevShen23}
\textsc{I.~{Chevyrev}} and \textsc{H.~{Shen}}.
\newblock {Invariant measure and universality of the 2D Yang-Mills Langevin
  dynamic}.
\newblock \emph{arXiv e-prints} (2023).
\newblock \myhref{arXiv:2302.12160}{https://arxiv.org/abs/2302.12160}.

\bibitem[FH20]{FrizHairer20}
\textsc{P.~K. Friz} and \textsc{M.~Hairer}.
\newblock \emph{A course on rough paths}.
\newblock Universitext. Springer, Cham, 2020,  xvi+346.
\newblock With an introduction to regularity structures, Second edition of [
  3289027].
\newblock
  \myhref{doi:10.1007/978-3-030-41556-3}{https://dx.doi.org/10.1007/978-3-030-41556-3}.

\bibitem[FO20]{Forlano_Okamoto_20}
\textsc{J.~Forlano} and \textsc{M.~Okamoto}.
\newblock A remark on norm inflation for nonlinear wave equations.
\newblock \emph{Dyn. Partial Differ. Equ.} \textbf{17}, no.~4, (2020),
  361--381.
\newblock
  \myhref{doi:10.4310/DPDE.2020.v17.n4.a3}{https://dx.doi.org/10.4310/DPDE.2020.v17.n4.a3}.

\bibitem[FV10]{FV10}
\textsc{P.~K. Friz} and \textsc{N.~B. Victoir}.
\newblock \emph{Multidimensional Stochastic Processes as Rough Paths: Theory
  and Applications}.
\newblock Cambridge Studies in Advanced Mathematics. Cambridge University
  Press, 2010.
\newblock
  \myhref{doi:10.1017/CBO9780511845079}{https://dx.doi.org/10.1017/CBO9780511845079}.

\bibitem[GIP15]{GIP15}
\textsc{M.~Gubinelli}, \textsc{P.~Imkeller}, and \textsc{N.~Perkowski}.
\newblock Paracontrolled distributions and singular {PDE}s.
\newblock \emph{Forum Math. Pi} \textbf{3}, (2015), e6, 75.
\newblock \myhref{arXiv:1210.2684v3}{https://arxiv.org/abs/1210.2684v3}.
\newblock
  \myhref{doi:10.1017/fmp.2015.2}{https://dx.doi.org/10.1017/fmp.2015.2}.

\bibitem[GKO18]{GKO18}
\textsc{M.~Gubinelli}, \textsc{H.~Koch}, and \textsc{T.~Oh}.
\newblock Renormalization of the two-dimensional stochastic nonlinear wave
  equations.
\newblock \emph{Trans. Amer. Math. Soc.} \textbf{370}, no.~10, (2018),
  7335--7359.
\newblock \myhref{doi:10.1090/tran/7452}{https://dx.doi.org/10.1090/tran/7452}.

\bibitem[GKOT21]{GKOT21}
\textsc{M.~Gubinelli}, \textsc{H.~Koch}, \textsc{T.~Oh}, and
  \textsc{L.~Tolomeo}.
\newblock {Global Dynamics for the Two-dimensional Stochastic Nonlinear Wave
  Equations}.
\newblock \emph{Int. Math. Res. Not. IMRN} (2021).
\newblock Rnab084.
\newblock
  \myhref{doi:10.1093/imrn/rnab084}{https://dx.doi.org/10.1093/imrn/rnab084}.

\bibitem[Hai14]{Hairer14}
\textsc{M.~Hairer}.
\newblock A theory of regularity structures.
\newblock \emph{Invent. Math.} \textbf{198}, no.~2, (2014), 269--504.
\newblock \myhref{arXiv:1303.5113}{https://arxiv.org/abs/1303.5113}.
\newblock
  \myhref{doi:10.1007/s00222-014-0505-4}{https://dx.doi.org/10.1007/s00222-014-0505-4}.

\bibitem[HLR23]{Hairer_Le_Rosati_22}
\textsc{M.~Hairer}, \textsc{K.~L\^{e}}, and \textsc{T.~Rosati}.
\newblock The {A}llen-{C}ahn equation with generic initial datum.
\newblock \emph{Probab. Theory Related Fields} \textbf{186}, no. 3-4, (2023),
  957--998.
\newblock
  \myhref{doi:10.1007/s00440-023-01198-5}{https://dx.doi.org/10.1007/s00440-023-01198-5}.

\bibitem[IO15]{Iwabuchi_Ogawa_15}
\textsc{T.~Iwabuchi} and \textsc{T.~Ogawa}.
\newblock Ill-posedness for the nonlinear {S}chr\"{o}dinger equation with
  quadratic non-linearity in low dimensions.
\newblock \emph{Trans. Amer. Math. Soc.} \textbf{367}, no.~4, (2015),
  2613--2630.
\newblock
  \myhref{doi:10.1090/S0002-9947-2014-06000-5}{https://dx.doi.org/10.1090/S0002-9947-2014-06000-5}.

\bibitem[Kis19]{Kishimoto19}
\textsc{N.~Kishimoto}.
\newblock A remark on norm inflation for nonlinear {S}chr\"{o}dinger equations.
\newblock \emph{Commun. Pure Appl. Anal.} \textbf{18}, no.~3, (2019),
  1375--1402.
\newblock
  \myhref{doi:10.3934/cpaa.2019067}{https://dx.doi.org/10.3934/cpaa.2019067}.

\bibitem[LCL07]{Lyons07}
\textsc{T.~J. Lyons}, \textsc{M.~Caruana}, and \textsc{T.~L{\'e}vy}.
\newblock \emph{Differential equations driven by rough paths}, vol. 1908 of
  \emph{Lecture Notes in Mathematics}.
\newblock Springer, Berlin, 2007,  xviii+109.
\newblock
  \myhref{doi:10.1007/978-3-540-71285-5}{https://dx.doi.org/10.1007/978-3-540-71285-5}.

\bibitem[Lyo91]{TerryPath}
\textsc{T.~Lyons}.
\newblock On the nonexistence of path integrals.
\newblock \emph{Proc. Roy. Soc. London Ser. A} \textbf{432}, no. 1885, (1991),
  281--290.
\newblock
  \myhref{doi:10.1098/rspa.1991.0017}{https://dx.doi.org/10.1098/rspa.1991.0017}.

\bibitem[Mas07]{Massart07}
\textsc{P.~Massart}.
\newblock \emph{Concentration inequalities and model selection}, vol. 1896 of
  \emph{Lecture Notes in Mathematics}.
\newblock Springer, Berlin, 2007,  xiv+337.
\newblock
  \myhref{doi:10.1007/978-3-540-48503-2}{https://dx.doi.org/10.1007/978-3-540-48503-2}.

\bibitem[MT15]{Molinet_Tayachi_15}
\textsc{L.~Molinet} and \textsc{S.~Tayachi}.
\newblock Remarks on the {C}auchy problem for the one-dimensional quadratic
  (fractional) heat equation.
\newblock \emph{J. Funct. Anal.} \textbf{269}, no.~8, (2015), 2305--2327.
\newblock
  \myhref{doi:10.1016/j.jfa.2015.08.002}{https://dx.doi.org/10.1016/j.jfa.2015.08.002}.

\bibitem[MW17]{MW17_Phi43}
\textsc{J.-C. Mourrat} and \textsc{H.~Weber}.
\newblock The dynamic {$\Phi^4_3$} model comes down from infinity.
\newblock \emph{Comm. Math. Phys.} \textbf{356}, no.~3, (2017), 673--753.
\newblock
  \myhref{doi:10.1007/s00220-017-2997-4}{https://dx.doi.org/10.1007/s00220-017-2997-4}.

\bibitem[Oh17]{Oh17}
\textsc{T.~Oh}.
\newblock A remark on norm inflation with general initial data for the cubic
  nonlinear {S}chr\"{o}dinger equations in negative {S}obolev spaces.
\newblock \emph{Funkcial. Ekvac.} \textbf{60}, no.~2, (2017), 259--277.
\newblock
  \myhref{doi:10.1619/fesi.60.259}{https://dx.doi.org/10.1619/fesi.60.259}.

\bibitem[OP16]{Oh_Pocovnicu_16}
\textsc{T.~Oh} and \textsc{O.~Pocovnicu}.
\newblock Probabilistic global well-posedness of the energy-critical defocusing
  quintic nonlinear wave equation on {$\mathbb{R}^3$}.
\newblock \emph{J. Math. Pures Appl. (9)} \textbf{105}, no.~3, (2016),
  342--366.
\newblock
  \myhref{doi:10.1016/j.matpur.2015.11.003}{https://dx.doi.org/10.1016/j.matpur.2015.11.003}.

\bibitem[OW18]{Oh_Wang_18}
\textsc{T.~Oh} and \textsc{Y.~Wang}.
\newblock On the ill-posedness of the cubic nonlinear {S}chr\"{o}dinger
  equation on the circle.
\newblock \emph{An. \c{S}tiin\c{t}. Univ. Al. I. Cuza Ia\c{s}i. Mat. (N.S.)}
  \textbf{64}, no.~1, (2018), 53--84.
\newblock \myhref{arXiv:1508.00827}{https://arxiv.org/abs/1508.00827}.

\bibitem[Poc17]{Pocovnicu17}
\textsc{O.~Pocovnicu}.
\newblock Almost sure global well-posedness for the energy-critical defocusing
  nonlinear wave equation on {$\mathbb{R}^d$}, {$d=4$} and {$5$}.
\newblock \emph{J. Eur. Math. Soc. (JEMS)} \textbf{19}, no.~8, (2017),
  2521--2575.
\newblock \myhref{doi:10.4171/JEMS/723}{https://dx.doi.org/10.4171/JEMS/723}.

\bibitem[She07]{Sheffield_GFF}
\textsc{S.~Sheffield}.
\newblock Gaussian free fields for mathematicians.
\newblock \emph{Probab. Theory Related Fields} \textbf{139}, no. 3-4, (2007),
  521--541.
\newblock
  \myhref{doi:10.1007/s00440-006-0050-1}{https://dx.doi.org/10.1007/s00440-006-0050-1}.

\bibitem[ST20]{SunTz_20_norm_inf}
\textsc{C.~Sun} and \textsc{N.~Tzvetkov}.
\newblock Concerning the pathological set in the context of probabilistic
  well-posedness.
\newblock \emph{C. R. Math. Acad. Sci. Paris} \textbf{358}, no. 9-10, (2020),
  989--999.
\newblock \myhref{arXiv:2001.10293}{https://arxiv.org/abs/2001.10293}.
\newblock
  \myhref{doi:10.5802/crmath.102}{https://dx.doi.org/10.5802/crmath.102}.

\bibitem[Str23]{Stroock23}
\textsc{D.~W. Stroock}.
\newblock \emph{Gaussian measures in finite and infinite dimensions}.
\newblock Universitext. Springer, Cham, [2023] \copyright 2023,  xii+144.
\newblock
  \myhref{doi:10.1007/978-3-031-23122-3}{https://dx.doi.org/10.1007/978-3-031-23122-3}.

\bibitem[Tol21]{Tolomeo21}
\textsc{L.~Tolomeo}.
\newblock Global well posedness of the two-dimensional stochastic nonlinear
  wave equation on an unbounded domain.
\newblock \emph{Ann. Probab.} \textbf{49}, no.~3, (2021), 1402--1426.
\newblock
  \myhref{doi:10.1214/20-aop1484}{https://dx.doi.org/10.1214/20-aop1484}.

\bibitem[Tri83]{Triebel}
\textsc{H.~Triebel}.
\newblock \emph{Theory of function spaces}, vol.~78 of \emph{Monographs in
  Mathematics}.
\newblock Birkh\"auser Verlag, Basel, 1983,  284.
\newblock
  \myhref{doi:10.1007/978-3-0346-0416-1}{https://dx.doi.org/10.1007/978-3-0346-0416-1}.

\bibitem[Wan15]{Wang15}
\textsc{B.~Wang}.
\newblock Ill-posedness for the {N}avier-{S}tokes equations in critical {B}esov
  spaces {$\dot B_{\infty,q}^{-1}$}.
\newblock \emph{Adv. Math.} \textbf{268}, (2015), 350--372.
\newblock
  \myhref{doi:10.1016/j.aim.2014.09.024}{https://dx.doi.org/10.1016/j.aim.2014.09.024}.

\bibitem[Xia21]{Xia21}
\textsc{B.~Xia}.
\newblock Generic illposedness for wave equation of power type on
  three-dimensional torus.
\newblock \emph{Int. Math. Res. Not. IMRN} , no.~20, (2021), 15533--15554.
\newblock
  \myhref{doi:10.1093/imrn/rnaa068}{https://dx.doi.org/10.1093/imrn/rnaa068}.

\bibitem[Yon10]{Yoneda10}
\textsc{T.~Yoneda}.
\newblock Ill-posedness of the 3{D}-{N}avier-{S}tokes equations in a
  generalized {B}esov space near {$\rm BMO^{-1}$}.
\newblock \emph{J. Funct. Anal.} \textbf{258}, no.~10, (2010), 3376--3387.
\newblock
  \myhref{doi:10.1016/j.jfa.2010.02.005}{https://dx.doi.org/10.1016/j.jfa.2010.02.005}.

\end{thebibliography}

\end{document}